\title{Upper Bound for Large Deviations of Reversible Diffusion Processes}
\author{Ann-Kathrin Jarecki}
\date{}
\newtheorem{thm}{Theorem}
\newtheorem{cor}[thm]{Corollary}
\newtheorem{lemma}[thm]{Lemma}
\newtheorem{defn}[thm]{Definition}
\newtheorem{rem}[thm]{Remark}
\numberwithin{thm}{section} \numberwithin{equation}{section}
\newenvironment{proof}{{\bf
Proof:\,}}{\hspace*{\fill}\rule{1.2ex}{1.2ex}\\ }
\newcommand{\D}{\mathds{D}}
\newcommand{\E}{\mathcal{E}}
\newcommand{\X}{\mathcal{X}}
\newcommand{\Y}{\mathcal{Y}}
\newcommand{\C}{\mathcal{C}}
\newcommand{\A}{\mathcal{A}}
\newcommand{\N}{\mathds{N}}
\newcommand{\R}{\mathds{R}}
\newcommand{\Pp}{\mathbb{P}}
\newcommand{\Ee}{\mathbb{E}}
\newcommand{\Prob}{\mathds{P}}
\newcommand{\eins}{\mathds{1}}
\DeclareMathOperator{\diam}{diam}
\DeclareMathOperator*{\esss}{esssup}
\DeclareMathOperator*{\essi}{essinf}
\begin{document}

\maketitle

\begin{abstract}
For a Markov process associated with a diffusion type Dirichlet form
an upper bound is shown for the law of the finite dimensional
distributions of the process. Under some more assumptions on the
underlaying space this is also shown for the law of the Markov
process itself. In the last section we want to give an application
to the Wasserstein diffusion.
\end{abstract}

\section{Introduction}
Let $(\X,\mathcal{F},m)$ be a probability space and $L^p=L^p(\X;m)$,
$p \in [1,\infty]$, the corresponding $L^p$--space with norm $\|
\cdot \|_{L^p}$ and inner product $(\cdot,\cdot)_{L^p}$. The
underlaying topological space $\X$ is assumed to be polish.

We start with a Dirichlet Form $\E$ on $\D \subset L^2$, i.e. $\D$
is a close subset of $L^2$ and $\E$ is a positive semidefinite,
symmetric and closed bilinear form with the property, that, if $f
\in \D$, then also $f \wedge 1 \in \D$ and $\E(f \wedge 1,f \wedge
1)\leq \E(f,f)$. Further we assume that the Dirichlet form is
conservative and local. So $\E$ is of diffusion type, i.e $\E$ has
no killing nor jumping measure. Hence, there is a Markov process
$(X_t)_{t\geq 0}$ associated with $\E$ with continuous trajectories.
The associated Markov semi--group we denote by $\{T_t\}$.

We define $\D_b= \D \cap L^{\infty}$ and the functional $I: \D_b
\times \D_b \times \D_b \to \R$ by

\begin{equation} \label{Ifunctional}
(f,g;h) \mapsto I(f,g;h)=\E(gh,f) + \E(fh,g) -\E(fg,h),
\end{equation}

with the  convention that $I_f(h)=I(f,f;h)$. The subset $\D_0$ of
$\D_b$ is given through

\begin{equation}\label{Dnull}
\D_0=\{f \in \D_b \ :\ I_f(h)\leq \|h\|_{L^1} \mbox{ for all } h \in
\D_b\}.
\end{equation}

Herewith we can define the intrinsic metric $d$:

\begin{equation}\label{iM}
d(A,B)=\sup_{f \in \D_0} \big\{ \essi_{x \in B} f(x)-\esss_{y \in A}
f(y)\big\},
\end{equation}

for two measurable subsets $A$ and $B$ of $\X$. We put $\sup
\emptyset = -\infty$ and $\inf \emptyset = \infty$.

Under this conditions, but without assuming $\X$ to be polish, Hino
and Ram\'{\i}rez showed in \cite{HR} that for all measurable $A$ and
$B$

\begin{equation}\label{HR}
\lim_{t \to 0} t \log P_t(A,B)=-\frac{d(A,B)^2}{2} \ ,
\end{equation}
where $P_t(A,B)=\int_A T_t \eins_B d m$.

This result was the basis of our considerations. Now, we have to
introduce some more notations to formulate our result precisely.

Let $X : [0,1] \times \Omega \to \X$, be the Markov process with
values in $\X$ associated with the Dirichlet form $\E$ on a
probability space $(\Omega, \Pp)$. For simplicity, we always assume
$\Omega = \mathcal{C}([0,1],\X)$ and $X_t(\omega)=\omega(t)$. For $s
>0$, we consider the time--scaled process

$$X^s_t= X_{s\cdot t},\quad t \in [0,1].$$

Let $\Pp^s$ be the probability measure on $\Omega$ induced by
$X^s_\cdot$. For an arbitrary partition $\Delta=\Delta^n=\{0=
t_0<t_1<\ldots<t_{n-1}<t_n=1\}$ of the unit interval $[0,1]$ we
define the projection $\Pi_\Delta : \C([0,1],\X) \to \X^{n+1}$ on
the values at $t_0,t_1,\ldots,t_n$:

$$\Pi_\Delta (X_\cdot)= (X_{t_0},X_{t_1},\ldots,X_{t_n}).$$

Further for a $(n+1)$--tuple $\A=(A_0,A_1,\ldots,A_{n+1}) \in
\X^{n+1}$ we define

$$\A_\Delta=\Pi_\Delta^{-1}(\A)=\Big\{\gamma : [0,1] \to \X
\mbox{ continuous curve } \mid \Pi_\Delta(\gamma) \in \A \Big\}.$$

First of all, in this work it is our aim to derive an upper bound
for the finite dimensional distributions of the Markov process
$X_t$, i.e.

\begin{eqnarray} \label{1}
\Prob(\{\omega : X_{\cdot}(\omega)\in \mathcal{A}_{\Delta}\})&=& \int_{A_0}\int_{A_1}
\ldots \int_{A_n}T_{t_n - t_{n-1}}\ldots T_{t_2
- t_{1}}T_{t_1 - t_{0}} dm\\
&\leq& \sqrt{m(A_0)}\sqrt{m(A_n)} \exp \big( -
\sum_{i=0}^{n-1}\frac{d^2(A_i,A_{i+1})}{t_{i+1}-t_i}\big).
\end{eqnarray}

Therefor we frequently apply the 'Integrated Gaussian estimates`,
also known as the 'Method of Davies`.

Accordingly we derive the upper bound of a short--time asymptotic
for the law of the Markov process. For this we use a version of the theorem of
Dawson--G\"artner to lift up the upper bound of the finite
dimensional distributions to an upper bound of the Markov process
itself.

To be more precise, if  $H :
\mathcal{C}([0,1],\X) \to [0,\infty]$ is defined by
$$H(\gamma)=\sup_{\Delta^n}H_{\Delta^n}(\gamma)=\frac{1}{2}
\sum_{i=0}^{n-1}
\frac{d(\gamma(t_i),\gamma(t_{i+1}))^2}{t_{i+1}-t_i},$$ where the
supremum is taken over all partitions $\Delta^n$ of the unit
interval, then we will show that $H$ coincides with the energy $\widetilde{H}$ of a curve
\begin{equation}
\widetilde{H}(\gamma):=\left\{ \begin{array}{cl}
                            \frac{1}{2}\int_0^1|\dot{\gamma}|^2(r)\mathrm{d}r & \mathrm{,\,if } \gamma \in AC^2([0,1],\X)\\
                            \infty & \mathrm{,\,else.}
                            \end{array}\right.
\end{equation}
Finally we prove the following main theorem:

\begin{thm}
For all $\alpha > 0$ and for all compact subsets $\Gamma$ of $\{\gamma : \widetilde{H}(\gamma)> \alpha\}$  we have
$$\lim_{s \to 0}s \log \Pp^s(\{\omega : X_\cdot(\omega) \in \Gamma\}) \leq -\alpha.$$
\end{thm}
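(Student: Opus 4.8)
The plan is to derive the path-space bound from the finite-dimensional upper bound \eqref{1}, from the identity $H=\widetilde H$ established above, and from a Dawson--G\"artner-type covering argument. The key preliminary observation is that, applied to the rescaled process $X^s$ — for which the increment over $[t_i,t_{i+1}]$ equals $s\,(t_{i+1}-t_i)$ — the bound \eqref{1}, together with $m(A_0),m(A_n)\le m(\X)=1$, yields for every partition $\Delta=\{0=t_0<\dots<t_n=1\}$ and every tuple $\A=(A_0,\dots,A_n)$ of measurable subsets of $\X$
\begin{equation}\label{eq:scaledfd}
\Pp^s\big(\{\omega:\,X_\cdot(\omega)\in\A_\Delta\}\big)\ \le\ \exp\Big(-\frac{1}{s}\sum_{i=0}^{n-1}\frac{d(A_i,A_{i+1})^2}{t_{i+1}-t_i}\Big)\qquad(s>0).
\end{equation}
The exponent appearing here is (twice) the discretised energy $H_\Delta$ evaluated on the tuple $\A$, so it suffices to cover $\Gamma$ by finitely many such cylinders on which the exponent is bounded below by $\alpha/s$.

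To this end, fix $\gamma\in\Gamma$. Since $\widetilde H(\gamma)>\alpha$ and $\widetilde H=H=\sup_{\Delta}H_\Delta$, there is a partition $\Delta_\gamma=\{0=t_0<\dots<t_n=1\}$ with $H_{\Delta_\gamma}(\gamma)>\alpha$, that is, $\sum_{i=0}^{n-1}d(\gamma(t_i),\gamma(t_{i+1}))^2/(t_{i+1}-t_i)=2H_{\Delta_\gamma}(\gamma)>2\alpha$. Under the additional hypotheses on $\X$ the intrinsic metric $d$ is a genuine metric that generates the topology and is compatible with the set-distance in \eqref{iM}, in the sense that for open balls $A_i\ni\gamma(t_i)$ of sufficiently small radius one still has $d(A_i,A_{i+1})>d(\gamma(t_i),\gamma(t_{i+1}))-\delta$ with $\delta$ as small as we wish — every $f\in\D_0$ is $1$-Lipschitz with respect to $d$ by \eqref{iM}, hence has small essential oscillation on small balls — so that $\sum_{i=0}^{n-1}d(A_i,A_{i+1})^2/(t_{i+1}-t_i)>\alpha$ remains true. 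Then $U_\gamma:=\{\gamma'\in\C([0,1],\X):\,\gamma'(t_i)\in A_i,\ i=0,\dots,n\}$ is an open neighbourhood of $\gamma$, and \eqref{eq:scaledfd} gives
\[
\Pp^s\big(\{\omega:\,X_\cdot(\omega)\in U_\gamma\}\big)\ \le\ \exp\Big(-\frac{\alpha}{s}\Big)\qquad\text{for every }s>0.
\]

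The family $\{U_\gamma:\gamma\in\Gamma\}$ is an open cover of the compact set $\Gamma$, so there is a finite subcover $U_{\gamma_1},\dots,U_{\gamma_N}$, with $N$ independent of $s$. By subadditivity,
\[
\Pp^s\big(\{\omega:\,X_\cdot(\omega)\in\Gamma\}\big)\ \le\ \sum_{k=1}^{N}\Pp^s\big(\{\omega:\,X_\cdot(\omega)\in U_{\gamma_k}\}\big)\ \le\ N\exp\Big(-\frac{\alpha}{s}\Big),
\]
hence $s\log\Pp^s(\{\omega:\,X_\cdot(\omega)\in\Gamma\})\le s\log N-\alpha$, and letting $s\to0$ (taking $\limsup$) yields the assertion; if $\Gamma=\emptyset$ the left-hand side is $-\infty$ and there is nothing to prove.

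The one genuinely delicate point is the passage from the point evaluations $\gamma(t_i)$ to small open sets $A_i$ without losing the strict lower bound $\alpha$ for the exponent: this is precisely where the compatibility of the intrinsic metric with the topology of $\X$ — the extra hypotheses announced in the abstract — is used, both to guarantee that cylinders over open balls are open in $\C([0,1],\X)$ and to bound $d(A_i,A_{i+1})$ below in terms of $d(\gamma(t_i),\gamma(t_{i+1}))$. The infinite-energy borderline cases are harmless: if $d(\gamma(t_i),\gamma(t_{i+1}))=\infty$ for some $i$ then already $\Pp^s$ of the corresponding cylinder vanishes by \eqref{eq:scaledfd}, and if $\widetilde H(\gamma)=\infty$ one simply chooses $\Delta_\gamma$ with $H_{\Delta_\gamma}(\gamma)$ as large as desired.
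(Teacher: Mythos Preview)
Your argument is correct and reaches the conclusion, but the route differs from the paper's. The paper does not prove this theorem directly; it assembles it from a chain of preceding results: first Theorem~\ref{5.2}, which for \emph{arbitrary} measurable tuples $\A=(A_0,\dots,A_n)$ establishes $\limsup_{s\to 0}s\log\Pp^s(X_\cdot\in\A_\Delta)\le -\inf_{\gamma\in\A_\Delta}H_\Delta(\gamma)$ via a laborious covering of each $A_i$ by small-diameter pieces (this is where pre-compactness of $\X$ enters); then the abstract weak Dawson--G\"artner mechanism (Corollary~\ref{weakdawsongaertner}) to lift to path space with rate function $H=\sup_\Delta H_\Delta$; and finally the identification $H=\widetilde H$. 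You reverse the order: invoke $H=\widetilde H$ first, then for each $\gamma\in\Gamma$ choose one partition $\Delta_\gamma$ with $H_{\Delta_\gamma}(\gamma)>\alpha$, take the $A_i$ to be small $d$-balls around $\gamma(t_i)$, apply Theorem~\ref{thm} to that single cylinder, and finish with a finite subcover of the compact $\Gamma$. This is essentially the relevant instance of Dawson--G\"artner unpacked by hand, and it buys something: since your $A_i$ are already of small diameter by construction, you never need the covering argument of Theorem~\ref{5.2} and hence do not use pre-compactness of $\X$ at this step.

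Two minor points worth tightening. First, the bound you quote from the introduction is missing the factor $\tfrac12$ that appears in the actual Theorem~\ref{thm}, so the exponent in your displayed scaled estimate is twice what Theorem~\ref{thm} gives; this is harmless because you start from $\sum_i d(\gamma(t_i),\gamma(t_{i+1}))^2/(t_{i+1}-t_i)>2\alpha$ and can shrink the balls so that the set version still exceeds $2\alpha-\eps$, but the constants should be tracked consistently. Second, the comparison $d(A_i,A_{i+1})\ge d(\gamma(t_i),\gamma(t_{i+1}))-2\delta$, while correct under assumption~(A), relies on passing between pointwise and $m$-essential values of $\D_0$-functions; the paper handles the analogous step through the function $d_A$ of Theorem~\ref{intrmetrik} in the proof of Theorem~\ref{5.2}, and your sketch would benefit from the same level of care.
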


\section{The Intrinsic Metric}

We recall the notation

$$\D_0=\{f \in \D_b \ :\ I_f(h)\leq \|h\|_{L^1} \mbox{ for all } h
\in \D_b=\D \cap L^{\infty}\}.$$

With this, we define in an intrinsic way a pseudo metric $d$ on $\X$
by

\begin{equation}\label{pseudoMet}
d(x,y)=\sup_{f \in \D_0} \big\{f(x)-f(y)\big\}.
\end{equation}

In general, $d$ may be degenerate, i.e. $d(x,y)=\infty$ or
$d(x,y)=0$ for some $x\neq y$. If we make the assumption\\[-1ex]

{\bf (A)} The topology induced by $d$ is equivalent to the original
topology. \\[-1ex]

then the following properties are equivalent

\begin{itemize}
\item $d$ is non--degenerated
\item $d(x,y)<\infty$ for all $x,y \in \X$
\item $\X$ is connected.
\end{itemize}

For two measurable sets $A$ and $B$ the intrinsic metric is given by

\begin{equation} \label{intrinsicmetric}
d(A,B)=\sup_{f \in \D_0}\left\{\essi_{x \in B} f(x) -\esss_{y \in A}
f(y)\right\},
\end{equation}

As is customary we take $\sup \emptyset = -\infty$ and $\inf
\emptyset = \infty$.

The following theorem can be found in \cite{HR} (Theorem 1.2).

\begin{thm} \label{intrmetrik}
Let $A$ be a positive measure set; then there exists an  (a.e.)
unique $[0,\infty]$--valued measurable function $d_A$ such that

\begin{itemize}
\item $d_A \wedge N \in \D_0$ for any $N \geq 0$.
\item $d_A = 0$ a.e. on $A$.
\item $d_A$ is the (a.e.) largest function that is satisfies the two previous requirements.
\end{itemize}
Moreover, if $B$ is another measurable set, then
$$d(A,B)=\essi_{x \in B} d_A(x).$$

\end{thm}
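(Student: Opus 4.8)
We only indicate the strategy. The plan is to realise $d_A$ as an essential supremum of all admissible functions and then to transport the defining inequality \eqref{Dnull} through a limiting procedure by Hilbert--space compactness. Throughout I would use the energy measure $\mu_{\langle f\rangle}$ of $f\in\D_b$: in the present setting it is the unique positive measure with $\int h\,d\mu_{\langle f\rangle}=2\E(fh,f)-\E(f^{2},h)=I_{f}(h)$ for all $h\in\D_b$, and one checks, following \cite{HR}, that $\D_0=\{f\in\D_b:\mu_{\langle f\rangle}\le m\}$. Two consequences are used repeatedly: first, taking $h=\eins_\X\in\D_b$ (legitimate because $\E$ is conservative, so $\eins_\X\in\D$ with $\E(\eins_\X,\,\cdot\,)\equiv0$) gives $2\E(f,f)=I_{f}(\eins_\X)\le m(\X)=1$, i.e.\ the uniform bound
\begin{equation*}
\E(f,f)\le\tfrac12\qquad\text{for every }f\in\D_0;
\end{equation*}
second, $\D_0$ is convex (Cauchy--Schwarz for energy measures) and closed under $f\mapsto f\vee g$ (strong locality, $\mu_{\langle f\vee g\rangle}=\eins_{\{f\ge g\}}\mu_{\langle f\rangle}+\eins_{\{f<g\}}\mu_{\langle g\rangle}$, together with the stability of $\D$ under normal contractions).

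Next I would introduce the family $\mathcal F_A$ of all $[0,\infty]$--valued measurable $f$ with $f\wedge N\in\D_0$ for every $N\ge0$ and $f=0$ $m$--a.e.\ on $A$, and set $d_A:=\esss\mathcal F_A$; this essential supremum exists, is a.e.\ unique, and vanishes a.e.\ on $A$ because every member of $\mathcal F_A$ does. Since $(f\vee g)\wedge N=(f\wedge N)\vee(g\wedge N)$, the family $\mathcal F_A$ is closed under finite maxima, hence upward directed, so one may pick $f_{n}\in\mathcal F_A$ with $f_{n}\nearrow d_A$ a.e. Once $d_A\in\mathcal F_A$ is established — i.e.\ $d_A$ itself satisfies the first two requirements — it is by construction the a.e.\ largest such function, so existence, maximality and a.e.\ uniqueness follow at once.

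The crucial point, and the step I expect to be the main obstacle, is that $d_A\wedge N\in\D_0$ for each fixed $N\ge0$: here $f_{n}\wedge N\in\D_0$ and $f_{n}\wedge N\nearrow d_A\wedge N$ a.e.\ with $0\le f_{n}\wedge N\le N$ on the probability space $\X$, so $f_{n}\wedge N\to d_A\wedge N$ in $L^{2}$, but the inequality $I_{f}(h)\le\|h\|_{L^{1}}$ is not preserved under mere $L^{2}$--limits. To get around this I would exploit that $\E(f_{n}\wedge N,f_{n}\wedge N)\le\tfrac12$ makes $(f_{n}\wedge N)_{n}$ bounded in the Hilbert space $(\D,\E_{1})$, $\E_{1}:=\E+(\cdot,\cdot)_{L^{2}}$; a weakly convergent subsequence then has limit $d_A\wedge N$ (its $L^{2}$--limit), whence $d_A\wedge N\in\D$, and by the Banach--Saks theorem suitable convex combinations $g_{k}$ of the $f_{n}\wedge N$ converge to $d_A\wedge N$ \emph{strongly} in $(\D,\E_{1})$. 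Each $g_{k}$ lies in $\D_0$ by convexity, hence $I_{g_{k}}(h)\le\|h\|_{L^{1}}$; and since multiplication by the fixed bounded function $h\in\D_b$ and squaring are continuous on $L^{\infty}$--bounded subsets of $(\D,\E_{1})$ (along subsequences, via quasi--continuity and the Leibniz rule for $\mu_{\langle\cdot\rangle}$), while $\E$ is continuous on $\D\times\D$, one passes to the limit to obtain $I_{d_A\wedge N}(h)=\lim_{k}I_{g_{k}}(h)\le\|h\|_{L^{1}}$ for every $h\in\D_b$. Thus $d_A\wedge N\in\D_0$, so $d_A\in\mathcal F_A$.

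It remains to prove $d(A,B)=\essi_{x\in B}d_A(x)$. For the inequality $\le$ I would take an arbitrary $f\in\D_0$, put $c:=\esss_{A}f$ and $g:=(f-c)^{+}$; then $g\in L^{\infty}$, $g=0$ a.e.\ on $A$, and for nonnegative $h\in\D_b$ one has $I_{g}(h)=\int_{\{f\ge c\}}h\,d\mu_{\langle f\rangle}\le I_{f}(h)\le\|h\|_{L^{1}}$ (and likewise for general $h\in\D_b$, using $h^{+}\in\D_b$), so $g\in\mathcal F_A$, hence $g\le d_A$ a.e.\ and $\essi_{B}f-\esss_{A}f=\essi_{B}(f-c)\le\essi_{B}g\le\essi_{B}d_A$; taking the supremum over $f\in\D_0$ yields $d(A,B)\le\essi_{B}d_A$. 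For the reverse inequality, for each $N\ge0$ the function $d_A\wedge N$ lies in $\D_0$, vanishes a.e.\ on $A$ (so $\esss_{A}(d_A\wedge N)=0$) and satisfies $\essi_{B}(d_A\wedge N)=(\essi_{B}d_A)\wedge N$, so $d(A,B)\ge(\essi_{B}d_A)\wedge N$; letting $N\to\infty$ gives $d(A,B)\ge\essi_{B}d_A$. All ingredients used outside the third paragraph — the energy--measure representation of $I_{f}$, its strong locality and Cauchy--Schwarz inequality, the chain and Leibniz rules and quasi--continuity — are standard in the theory of (regular strongly local) Dirichlet forms.
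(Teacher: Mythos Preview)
The paper does not give its own proof of this theorem: immediately before the statement it writes ``The following theorem can be found in \cite{HR} (Theorem 1.2)'' and then moves on, so there is nothing in the paper to compare your argument against. Your sketch is in fact the standard construction used by Hino and Ram\'{\i}rez --- realise $d_A$ as the essential supremum of the admissible family, use lattice stability of $\D_0$ to get a monotone approximating sequence, and then pass to the limit via $\E_1$--boundedness and Banach--Saks --- so you are reproducing the cited source rather than diverging from it.

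Two small points worth tightening. First, your limit step $I_{g_k}(h)\to I_{d_A\wedge N}(h)$ is justified more cleanly by the bilinear Cauchy--Schwarz for energy measures,
\[
\Bigl|\int h\,d\mu_{\langle g_k\rangle}-\int h\,d\mu_{\langle g\rangle}\Bigr|
=\Bigl|\int h\,d\mu_{\langle g_k-g,\,g_k+g\rangle}\Bigr|
\le \|h\|_{L^\infty}\,\bigl(2\E(g_k-g,g_k-g)\bigr)^{1/2}\bigl(2\E(g_k+g,g_k+g)\bigr)^{1/2},
\]
than by the somewhat vague appeal to ``continuity of multiplication and squaring''. Second, when you put $g=(f-c)^+$ into $\mathcal F_A$ you need $g\wedge N\in\D_0$ for every $N$, not just $g\in\D_0$; this follows at once from locality ($\mu_{\langle g\wedge N\rangle}=\eins_{\{g<N\}}\mu_{\langle g\rangle}\le\mu_{\langle g\rangle}\le m$), but it should be said.
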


For further details of the intrinsic metric, see also for example
\cite{sturmI}.

\section{Upper Bound for Finite Dimensional Distributions}

We carry over the notation from the introduction. That is
$(\X,\mathcal{F},m)$ is a probability space and $L^p=L^p(\X;m)$, $p
\in [1,\infty]$, the corresponding $L^p$--space with norm $\| \cdot
\|_{L^p}$ and inner product $(\cdot,\cdot)_{L^p}$. $\E$ is a
Dirichlet form on $\D \subset L^2$, which we assume to be
conservative and local. The goal of this section is to derive the
following theorem, which gives us an upper bound for the finite
dimensional distributions of the Markov process $X_t$ associated to
our Dirichlet form $\E$ on the probability space $(\Omega,\Pp)$, $X
: \Omega \to \mathcal{C}([0,1],\X)$.

\begin{thm} \label{thm}
For a partition $\Delta=\{0=t_0 < t_1 < \ldots < t_n=1\}$ and for
all $\A=(A_0,A_1,\ldots,A_{n}) \in \X^{n+1}$ define
$\A_\Delta=\Pi_\Delta^{-1}(\A)=\Big\{\gamma : [0,1] \to \X \mbox{
continuous } \mid \Pi_\Delta(\gamma) \in \A \Big\}.$ Then
\begin{eqnarray*}
\Prob(X_{\cdot}\in \mathcal{A}_{\Delta})&=&
\int_{A_0}\int_{A_1} \ldots \int_{A_n}T_{t_n - t_{n-1}}\ldots T_{t_2
- t_{1}}T_{t_1 - t_{0}} dm\\
&\leq& \sqrt{m(A_0)}\sqrt{m(A_n)} \exp \big( - \frac{1}{2}
\sum_{i=0}^{n-1}\frac{d^2(A_i,A_{i+1})}{t_{i+1}-t_i}\big).
\end{eqnarray*}
\end{thm}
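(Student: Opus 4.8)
The plan is to prove the estimate by induction on the number $n$ of subintervals, with the single-step case $n=1$ carrying the analytic content via the Davies method (integrated Gaussian estimates). For $n=1$ the claim is
$$P_t(A,B)=\int_A T_t\eins_B\,dm\leq\sqrt{m(A)}\,\sqrt{m(B)}\,\exp\!\Big(-\frac{d(A,B)^2}{2t}\Big),$$
and I would obtain it exactly as in the proof of \eqref{HR} in \cite{HR}. The mechanism is the following: fix a bounded function $\psi$ with $I_\psi(h)\le\|h\|_{L^1}$ for all $h\in\D_b$ (i.e. $\psi\in\D_0$ after normalization), and for a parameter $\rho>0$ consider the perturbed semigroup $T_t^{\rho\psi}f=e^{-\rho\psi}T_t(e^{\rho\psi}f)$. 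The defining inequality for $\D_0$ controls the commutator of $\E$ with multiplication by $e^{\rho\psi}$, which yields the differential inequality $\frac{d}{dt}\|T_t^{\rho\psi}u\|_{L^2}^2\le\rho^2\|T_t^{\rho\psi}u\|_{L^2}^2$ for the perturbed semigroup acting on $L^2$; Gronwall then gives $\|T_t^{\rho\psi}\|_{L^2\to L^2}\le e^{\rho^2 t/2}$. Applying this to $u=\eins_B$, pairing against $\eins_A$, and using that on $A$ we have $\psi\le\esss_A\psi$ while on $B$ we have $\psi\ge\essi_B\psi$, one gets
$$P_t(A,B)\le\sqrt{m(A)}\sqrt{m(B)}\,\exp\!\Big(\rho^2 t/2-\rho\,(\essi_B\psi-\esss_A\psi)\Big).$$
Optimizing over $\rho$ gives the exponent $-\tfrac{1}{2t}(\essi_B\psi-\esss_A\psi)^2$, and then taking the supremum over all admissible $\psi$ and invoking the definition \eqref{intrinsicmetric} of $d(A,B)$ produces the $n=1$ bound. (A minor technical point: one truncates $\psi$ to $\psi\wedge N$, which stays in $\D_0$ by Theorem~\ref{intrmetrik}, runs the argument, and lets $N\to\infty$ at the end.)

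For the inductive step I would split $d m$-integration at the interior node $t_1$. Writing $u=T_{t_1-t_0}\eins_{A_0}$ and inserting $\eins=\eins_{A_1}+\eins_{A_1^c}$ is the wrong move; instead the clean route is Cauchy--Schwarz in $L^2(m)$ combined with the semigroup property and self-adjointness of $T_t$. Concretely, since $\E$ is symmetric each $T_t$ is self-adjoint on $L^2(m)$, so
$$\Prob(X_\cdot\in\A_\Delta)=\int_X T_{t_1-t_0}\eins_{A_0}\cdot\eins_{A_1}\cdot T_{t_2-t_1}\eins_{A_2}\cdots\,dm,$$
and one estimates the first factor against $A_1$ using the $n=1$ bound for $P_{t_1-t_0}(A_0,A_1)$ after a Cauchy--Schwarz step that separates $\eins_{A_0}$ from the rest; iterating across the nodes $t_1,\dots,t_{n-1}$ and keeping track of the half-powers $\sqrt{m(A_i)}$ that appear and cancel at each interior node — only the endpoints $\sqrt{m(A_0)}$ and $\sqrt{m(A_n)}$ survive — yields the stated product of exponentials $\exp\big(-\tfrac12\sum_i d^2(A_i,A_{i+1})/(t_{i+1}-t_i)\big)$. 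This bookkeeping is the routine part; the place where care is genuinely needed is making the Cauchy--Schwarz splitting compatible with the single-step estimate so that the interior normalizing factors telescope exactly rather than accumulating.

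I expect the main obstacle to be the rigorous justification of the perturbed-semigroup differential inequality under the bare hypotheses assumed here — that $\E$ is a conservative, local Dirichlet form, with $\psi$ only bounded measurable and $e^{\rho\psi}$ not a priori in the domain in a form-sense. One has to work with $\psi\wedge N$, verify that multiplication by $e^{\rho(\psi\wedge N)}$ maps $\D_b$ into $\D_b$, check that $I_\psi(h)\le\|h\|_{L^1}$ translates into the needed bound on $\E(e^{\rho\psi}f, e^{-\rho\psi}g)-\E(f,g)$ by a density/chain-rule argument for Dirichlet forms, and control the resulting quadratic-in-$\rho$ error term uniformly in $N$. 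Once the $n=1$ Davies estimate is in hand with explicit constants, the induction and the telescoping of the $\sqrt{m(A_i)}$ factors present no essential difficulty.
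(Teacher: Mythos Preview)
Your treatment of the $n=1$ case is correct and matches the paper's Corollary~\ref{korollar}: the Davies perturbed-semigroup estimate (Lemma~\ref{GauEst}) followed by optimization in the parameter gives the two-set bound.

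The gap is in the inductive step. You propose to apply Cauchy--Schwarz and then invoke ``the $n=1$ bound for $P_{t_1-t_0}(A_0,A_1)$'' on the first factor. But $P_t(A,B)$ is an $L^1$ quantity, whereas Cauchy--Schwarz leaves you with $L^2$ norms. Concretely, after self-adjointness and Cauchy--Schwarz you face a factor like $\|\eins_{A_1}T_{t_1-t_0}\eins_{A_0}\|_{L^2}$, and the only way to connect this to the black-box $n=1$ bound is through $G\le 1\Rightarrow\|G\|_{L^2}^2\le\|G\|_{L^1}$, which halves the exponent. Iterating this loses a factor of $2$ at every step, so the exponent degrades geometrically rather than summing to $-\tfrac12\sum_i d^2(A_i,A_{i+1})/(t_{i+1}-t_i)$. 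There is also no mechanism by which interior $\sqrt{m(A_i)}$ factors ``telescope'': if you genuinely used the two-set bound at each node you would accumulate $\sqrt{m(A_0)}\,m(A_1)\cdots m(A_{n-1})\,\sqrt{m(A_n)}$.

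The paper does not use Corollary~\ref{korollar} as a black box. It iterates the \emph{$L^2$} Davies estimate (Lemma~\ref{GauEst}) directly, choosing a different weight $\omega_i=d_{A_i}\wedge d(A_i,A_{i+1})\in\D_0$ and a separate parameter $\alpha_i$ on each subinterval. A single Cauchy--Schwarz is used at the very start; thereafter one carries an $L^2$ norm of the form $\|e^{-\alpha_i\omega_i}T_{t_{i+1}-t_i}(\,\cdots)\|_{L^2}$ through the chain. At each interior node one inserts $e^{\alpha_{i+1}\omega_{i+1}}e^{-\alpha_{i+1}\omega_{i+1}}$ and pulls out $L^\infty$ factors using $\omega_i=d(A_i,A_{i+1})$ on $A_{i+1}$ and $\omega_{i+1}=0$ on $A_{i+1}$; these contribute $e^{-\alpha_i d(A_i,A_{i+1})}$ and $1$ respectively, with \emph{no} $m(A_i)$ factor. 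Only the endpoint $L^2$ norms $\|e^{\alpha_0\omega_0}\eins_{A_0}\|_{L^2}=\sqrt{m(A_0)}$ and $\|e^{-\alpha_{n-1}\omega_{n-1}}\eins_{A_n}\|_{L^2}=e^{-\alpha_{n-1}d(A_{n-1},A_n)}\sqrt{m(A_n)}$ produce measure factors. Finally each $\alpha_i$ is optimized independently. So the missing idea in your plan is to iterate Lemma~\ref{GauEst} at the $L^2$ level with node-adapted weights, not the finished $L^1$ estimate of Corollary~\ref{korollar}.
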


We need the following lemmata:

\begin{lemma} \label{argh}
Let $\omega, u \in \D$. Then for all fix $\alpha \in \mathds{R}$
\begin{eqnarray*}
-2\E(e^{2 \alpha \omega}u,u) &\leq & 2 \alpha^2 \E(\omega u^2 e^{2 \alpha \omega}, \omega)
- \alpha^2 \E(\omega^2,u^2 e^{2 \alpha \omega})\\[1.5ex]
&=& \alpha^2 I_{\omega}(u^2 e^{2 \alpha \omega}),
\end{eqnarray*}
here $I_f(h)=I(f,f;h)=2 \E(fh,f) -\E(fg,h)$ (cf.
(\ref{Ifunctional})).
\end{lemma}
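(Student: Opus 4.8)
The plan is to prove Lemma~\ref{argh} by a direct computation using the bilinearity and symmetry of $\E$ together with the diffusion (Leibniz / chain rule) property that is implicit in the locality assumption. The central observation is that for a local Dirichlet form of diffusion type the carr\'e du champ operator $\Gamma$ satisfies a chain rule, so that terms like $\E(\Phi(\omega)u, u)$ can be rewritten in terms of $\E(\omega u^2, \omega)$, $\E(\omega^2, u^2)$ and similar expressions. Concretely, I would first record the Leibniz-type identities
\[
\E(fg,h)+\E(h,fg) = \E(fh,g)+\E(gh,f) - I(f,g;h),
\]
which is just a rearrangement of the definition \eqref{Ifunctional}, and the chain-rule estimate for $\E$ applied to the smooth convex function $x\mapsto e^{2\alpha x}$; here one uses that $\Gamma(\Phi(\omega)) = \Phi'(\omega)^2\,\Gamma(\omega)$ and that $\Gamma\ge 0$.

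The main step is the algebraic manipulation. I would expand the left-hand side $-2\E(e^{2\alpha\omega}u,u)$ by treating $e^{2\alpha\omega}u$ as a product and applying the product rule for $\E$, splitting it as a sum of an "$\E$ of $u^2$ times a coefficient" term and an "$\E$ of $e^{2\alpha\omega}$ against $u^2$" type term. Writing $v = e^{\alpha\omega}u$ so that $v^2 = e^{2\alpha\omega}u^2$ is often the cleanest route: one computes $\Gamma(v,v)$ via the chain/product rule, obtaining
\[
\Gamma(v,v) = e^{2\alpha\omega}\Gamma(u,u) + 2\alpha e^{2\alpha\omega}u\,\Gamma(u,\omega) + \alpha^2 e^{2\alpha\omega}u^2\,\Gamma(\omega,\omega),
\]
and then uses $\E(v,v) = \int \Gamma(v,v)\,dm \ge 0$ to get
\[
-\,e^{2\alpha\omega}\Gamma(u,u) \;\le\; 2\alpha e^{2\alpha\omega}u\,\Gamma(u,\omega) + \alpha^2 e^{2\alpha\omega}u^2\,\Gamma(\omega,\omega),
\]
after which integrating and doubling, and re-expressing the cross term $\int 2\alpha e^{2\alpha\omega}u\,\Gamma(u,\omega)\,dm$ via an integration-by-parts / Leibniz identity as $\alpha\,\E(u^2 e^{2\alpha\omega},\omega) - \alpha\int u^2\cdot 2\alpha e^{2\alpha\omega}\Gamma(\omega,\omega)\,dm$ (using the chain rule on $e^{2\alpha\omega}$ once more), gives exactly the claimed bound $2\alpha^2\E(\omega u^2 e^{2\alpha\omega},\omega) - \alpha^2\E(\omega^2, u^2 e^{2\alpha\omega})$. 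The final equality with $\alpha^2 I_\omega(u^2 e^{2\alpha\omega})$ is then immediate from the definition \eqref{Ifunctional} of $I$, since $I_\omega(h) = 2\E(\omega h,\omega) - \E(\omega^2,h)$.

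One technical point that must be addressed is that $e^{2\alpha\omega}$ and products such as $\omega u^2 e^{2\alpha\omega}$ need not lie in $\D$ a priori, since $\omega, u \in \D$ are only assumed to be in $L^2$ and the carr\'e du champ is only an $L^1$ object. I expect this to be the main obstacle rather than the algebra. The standard remedy, which I would follow, is to first prove the inequality for bounded $\omega$ (so that $e^{2\alpha\omega}$ is bounded and the relevant products stay in $\D_b$ and the Leibniz rules are literally applicable), and then pass to general $\omega \in \D$ by a truncation argument, replacing $\omega$ with $\omega_N = (-N)\vee(\omega\wedge N)$, using that $\omega_N \to \omega$ in $\D$ (i.e. in the form norm $\E(\cdot,\cdot)^{1/2} + \|\cdot\|_{L^2}$, which holds for normal contractions on a Dirichlet form) and that both sides of the inequality are continuous in $\omega$ along this approximation. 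Alternatively, if one only needs the statement for $\omega \in \D_b$ in the applications, the truncation step can be skipped; I would state explicitly which version is actually used later and prove that one, keeping the bounded-coefficient computation as the core of the argument.
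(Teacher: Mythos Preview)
Your approach is essentially the same as the paper's: both reduce the inequality to the nonnegativity $\E(e^{\alpha\omega}u,e^{\alpha\omega}u)\ge 0$, expanded via the chain and product rules for the energy measure. The paper phrases this using the Bouleau--Hirsch measures $\sigma_{i,j}^{(u,\omega)}$ from \cite{BH91} and simply verifies that $2\E(u,e^{2\alpha\omega}u)+2\alpha^2\E(\omega u^2 e^{2\alpha\omega},\omega)-\alpha^2\E(\omega^2,u^2 e^{2\alpha\omega})$ collapses to $2\E(e^{\alpha\omega}u,e^{\alpha\omega}u)$, whereas you write the same computation with the carr\'e du champ~$\Gamma$; your additional remarks on truncating $\omega$ to justify membership in $\D_b$ are a point the paper leaves implicit.
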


\begin{proof}
We put $f=(u,\omega) \in \D^2$ then according to results of
\cite{BH91} we get the following estimates

\begin{itemize}
\item[(1)] $\E(u,e^{2 \alpha \omega}u)= 2 \int e^{2 \alpha \omega}
d\sigma_{1,1}^f + 4 \alpha \int u e^{2 \alpha \omega} d\sigma_{1,2}^f$

\item[(2)] $\E(\omega u^2 e^{2 \alpha \omega},\omega)=2 \int u \omega
e^{2 \alpha \omega}d\sigma_{1,2}^f + \int u^2 e^{2 \alpha \omega}d\sigma_{2,2}^f
+ 2 \alpha \int \omega u^2 e^{2 \alpha \omega}d\sigma_{2,2}^f$

\item[(3)] $\E(\omega^2,u^2 e^{2 \alpha \omega})=4 \int \omega
u e^{2 \alpha \omega} d\sigma_{2,1}^f + 4\alpha \int \omega u^2
e^{2 \alpha \omega}d\sigma_{2,2}^f$
\end{itemize}

Therefore we get
\begin{eqnarray*}
\lefteqn{ 2 \E(u,e^{2 \alpha \omega}u) + 2 \alpha^2 \E(\omega u^2
e^{2 \alpha \omega},\omega) - \alpha^2 \E(\omega^2,u^2 e^{2 \alpha \omega})
\qquad {}}\\[1.5ex]
&&= 2 \int e^{2 \alpha \omega} d\sigma_{1,1}^f + 4 \alpha \int u e^{2 \alpha \omega}
d\sigma_{1,2}^f + 2 \alpha^2 \int u^2 e^{2 \alpha \omega}d\sigma_{2,2}^f\\[1.5ex]
&&= \E(e^{2 \alpha \omega}u,2 e^{2 \alpha \omega}u)\\
&&\geq 0
\end{eqnarray*}
and we obtain the claim.
\end{proof}

The next lemma is a version of the 'Integrated Gaussian estimates`,
also known as the 'The Method of Davies`, see for example
\cite{SII}.

\begin{lemma} \label{GauEst}
Let $\omega \in \D_0$ and $u_t = T_t f$, where $T_t$ is the
Markovian semi--group associated with the Dirichlet form $\E$. Then
for all $t
> 0$ and fixed, but arbitrary $\alpha \in \mathds{R}$
$$\|e^{\alpha \omega}u_t\|_{L^2}\leq e^{\alpha^2 t/2} \|e^{\alpha \omega}u_0\|_{L^2}.$$
\end{lemma}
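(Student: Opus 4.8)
The plan is to run Davies's method. Introduce the auxiliary function
$$\phi(t) = \|e^{\alpha\omega}u_t\|_{L^2}^2 = (e^{2\alpha\omega}u_t,u_t)_{L^2},$$
show that it satisfies the differential inequality $\phi'(t) \le \alpha^2\,\phi(t)$ for $t>0$, and then conclude by Gronwall's lemma that $\phi(t) \le e^{\alpha^2 t}\phi(0)$, which upon taking square roots is exactly the claimed bound $\|e^{\alpha\omega}u_t\|_{L^2}\le e^{\alpha^2 t/2}\|e^{\alpha\omega}u_0\|_{L^2}$.

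First I would reduce to $f \in L^2\cap L^\infty$ by density, so that $u_t=T_tf$ is itself bounded (the semigroup is Markovian, hence an $L^\infty$-contraction); the general $f\in L^2$ then follows by approximation together with lower semicontinuity of the $L^2$-norm. Observe that $\omega\in\D_0\subseteq\D_b$ is bounded, so $e^{\alpha\omega},e^{2\alpha\omega}\in L^\infty$, whence $\phi(t)<\infty$ for every $t\ge 0$, and $\phi$ is continuous on $[0,\infty)$ because $t\mapsto u_t$ is $L^2$-continuous.

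For $t>0$, analyticity of the symmetric Markov semigroup places $u_t\in D(\OpL)\subseteq\D$, where $\OpL\ge 0$ is the self-adjoint generator with $\E(v,w)=(\OpL v,w)_{L^2}$ and $\dot u_t=-\OpL u_t$. Differentiating and using symmetry of the inner product and of the multiplication operator gives
$$\phi'(t) = 2(e^{2\alpha\omega}\dot u_t,u_t)_{L^2} = -2(\OpL u_t,e^{2\alpha\omega}u_t)_{L^2} = -2\,\E(u_t,e^{2\alpha\omega}u_t),$$
where one checks, via the product and chain rules for Dirichlet forms, that $e^{2\alpha\omega}u_t\in\D$ so the pairing is legitimate. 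Applying Lemma \ref{argh} with $u=u_t$ yields $\phi'(t)\le \alpha^2\, I_\omega(u_t^2 e^{2\alpha\omega})$. Again by the product and chain rules, $u_t^2 e^{2\alpha\omega}\in\D$ and is bounded, hence lies in $\D_b$, so the defining property of $\D_0$ gives $I_\omega(u_t^2 e^{2\alpha\omega})\le \|u_t^2 e^{2\alpha\omega}\|_{L^1}=\int_{\X}u_t^2 e^{2\alpha\omega}\,dm=\phi(t)$. Thus $\phi'(t)\le\alpha^2\phi(t)$ on $(0,\infty)$, and since $\phi$ is continuous up to $0$, Gronwall's lemma finishes the argument.

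The main obstacle I anticipate is not the differential inequality itself — which is essentially Lemma \ref{argh} together with the definition of $\D_0$ — but the regularity bookkeeping: justifying that $\phi$ is differentiable with the stated derivative (using analyticity of the symmetric semigroup, or equivalently spectral calculus, to get $u_t\in D(\OpL)$ for $t>0$), and verifying that the products $e^{2\alpha\omega}u_t$ and $u_t^2 e^{2\alpha\omega}$ belong to $\D$, respectively $\D_b$, so that $\E$ and $I_\omega$ may be evaluated on them. The reduction from general $f\in L^2$ to bounded $f$ and the final passage to the limit are standard but also require a little care.
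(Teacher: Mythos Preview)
Your proposal is correct and follows essentially the same route as the paper: define $\phi(t)=\|e^{\alpha\omega}u_t\|_{L^2}^2$, differentiate to get $-2\E(u_t,e^{2\alpha\omega}u_t)$, apply Lemma~\ref{argh} and then the defining property of $\D_0$ to obtain $\phi'(t)\le\alpha^2\phi(t)$, and finish with Gronwall. The only difference is cosmetic---the paper writes the same computation in integrated form, $\phi(t)-\phi(0)\le\alpha^2\int_0^t\phi(r)\,dr$---and your version is in fact more careful about the regularity issues (boundedness of $f$, $u_t\in D(\OpL)$, membership of the products in $\D_b$) that the paper leaves implicit.
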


\begin{proof}
\begin{eqnarray*}
\|e^{\alpha \omega}u_t\|^2_{L^2}-\|e^{\alpha \omega}u_0\|^2_{L^2}&=&\int_\X e^{2\alpha \omega}(u_t^2-u_0^2)dm\\
&= &2\int_0^t\int_\X (\frac{\partial}{\partial r} u_r) u_r e^{2\alpha \omega}dm\,dr\\
&= &2\int_0^t - \lim_{t \to 0} \int_\X T_r (\frac{f - T_t f}{t}) T_r f \, e^{2\alpha \omega}dm\,dr\\
&= &-2\int_0^t \E(u_r,e^{2\alpha \omega}u_r)dr\\
&\leq& \int_0^t \alpha^2 I_{\omega}(u_r^2 e^{2 \alpha \omega})dr\\
&\leq& \alpha^2 \int_0^t \|e^{2 \alpha \omega}u_r^2\|_{L^1}dr\\
&=& \alpha^2 \int_0^t \|e^{\alpha \omega}u_r\|_{L^2}^2dr.
\end{eqnarray*}

From this, the claim follows by Gronwall's Lemma.
\end{proof}

Before proving the theorem in its full generality we want to show
the special case of two dimensional distributions. This is also
shown in the paper by Hino and Ram\'{\i}rez \cite{HR} in a similar
way.

\begin{cor} \label{korollar}
Let $A$ and $B$ two measurable subsets of $\X$, $s > 0$, then

\begin{eqnarray*} \Pp(X_0 \in A,X_s \in B) &= & \Pp^s(X_0\in A,X_1\in B) \\[1.5ex]
&\leq& \sqrt{m(A)}\sqrt{m(B)}\ e^{-\frac{d^2(A,B)}{2s}}.
\end{eqnarray*}

\end{cor}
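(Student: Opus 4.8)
The plan is to apply the integrated Gaussian estimate of Lemma~\ref{GauEst} with a cleverly chosen test function $\omega$, and then optimize over a scaling parameter. First I would take $f=\eins_B$ and $u_t=T_t\eins_B$, so that $u_0=\eins_B$ and $\Pp(X_0\in A,X_s\in B)=\int_A T_s\eins_B\,dm=\int_A u_s\,dm$. Using Cauchy--Schwarz and the fact that $e^{-\alpha\omega}\le e^{-\alpha\essi_{x\in A}\omega(x)}$ on $A$ for $\alpha\ge 0$, I would write
\begin{equation*}
\int_A u_s\,dm=\int_A e^{-\alpha\omega}e^{\alpha\omega}u_s\,dm\le \sqrt{m(A)}\,e^{-\alpha\,\essi_{x\in A}\omega(x)}\,\|e^{\alpha\omega}u_s\|_{L^2}.
\end{equation*}
Then Lemma~\ref{GauEst} bounds $\|e^{\alpha\omega}u_s\|_{L^2}$ by $e^{\alpha^2 s/2}\|e^{\alpha\omega}u_0\|_{L^2}=e^{\alpha^2 s/2}\|e^{\alpha\omega}\eins_B\|_{L^2}$, and since $\omega$ can be chosen to vanish a.e.\ on $B$ this last norm is just $\sqrt{m(B)}$.

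Concretely, given $\varepsilon>0$ I would pick $f\in\D_0$ nearly realizing the supremum in \eqref{intrinsicmetric}, i.e.\ with $\essi_{x\in B}f(x)-\esss_{y\in A}f(y)\ge d(A,B)-\varepsilon$; better yet, invoke Theorem~\ref{intrmetrik} and set $\omega=-(d_B\wedge N)$, which lies in $\D_0$ (hence is admissible in Lemma~\ref{GauEst}), vanishes a.e.\ on $B$, and satisfies $-\essi_{x\in A}\omega(x)=\essi_{x\in A}(d_B\wedge N)(x)=d(B,A)\wedge N=d(A,B)\wedge N$ by symmetry of $d$. Combining the two displays gives, for every $\alpha\ge 0$ and every $N$,
\begin{equation*}
\Pp(X_0\in A,X_s\in B)\le \sqrt{m(A)}\sqrt{m(B)}\,\exp\!\Big(\tfrac{\alpha^2 s}{2}-\alpha\,\big(d(A,B)\wedge N\big)\Big).
\end{equation*}
Minimizing the exponent over $\alpha\ge 0$ yields $\alpha=(d(A,B)\wedge N)/s$ and exponent $-\tfrac{(d(A,B)\wedge N)^2}{2s}$; letting $N\to\infty$ produces the claimed bound $\sqrt{m(A)}\sqrt{m(B)}\,e^{-d^2(A,B)/(2s)}$. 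The identity $\Pp(X_0\in A,X_s\in B)=\Pp^s(X_0\in A,X_1\in B)$ is immediate from the definition of the time-scaled process $X^s_t=X_{st}$.

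The main obstacle I anticipate is the admissibility bookkeeping around $\omega$: Lemma~\ref{GauEst} is stated for $\omega\in\D_0$, but $d_B$ itself is only $[0,\infty]$-valued with $d_B\wedge N\in\D_0$, so one must run the estimate with the truncation $\omega_N=-(d_B\wedge N)$ and pass to the limit at the end — which is harmless since the final bound is monotone in $N$. A secondary point is that if $d(A,B)=\infty$ the right-hand side is $0$ and one should note the estimate still holds (the inequality becomes $\Pp(\cdot)\le 0$, forcing the joint probability to vanish, consistent with $A,B$ lying in different connected components under assumption \textbf{(A)}); and if $m(A)=0$ or $m(B)=0$ everything is trivial. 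Apart from these edge cases the argument is a direct two-line application of Cauchy--Schwarz plus Lemma~\ref{GauEst} plus a one-variable optimization.
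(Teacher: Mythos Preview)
Your strategy is exactly the paper's: split by $e^{\pm\alpha\omega}$, apply Cauchy--Schwarz, invoke Lemma~\ref{GauEst}, and optimize in $\alpha$; the paper merely bases the weight at the other set, taking $\omega=d_A\wedge d(A,B)$ and separating the cases $d(A,B)<\infty$ and $d(A,B)=\infty$, whereas your truncation by $N$ treats both at once.

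There is, however, a sign slip that breaks the estimate as written. With $\omega=-(d_B\wedge N)$ one has
\[
\essi_{x\in A}\omega(x)=\essi_{x\in A}\big(-(d_B\wedge N)(x)\big)=-\esss_{x\in A}(d_B\wedge N)(x),
\]
not $-\essi_{x\in A}(d_B\wedge N)(x)$. Hence $-\alpha\,\essi_A\omega=\alpha\,\esss_A(d_B\wedge N)\ge 0$ and the factor $e^{-\alpha\essi_A\omega}$ is $\ge 1$, so after optimizing over $\alpha\ge 0$ you only recover the trivial bound $\sqrt{m(A)m(B)}$. The fix is simply to drop the minus sign: take $\omega=d_B\wedge N\in\D_0$. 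Then $\omega=0$ a.e.\ on $B$, and on $A$
\[
e^{-\alpha\omega}\le e^{-\alpha\,\essi_A(d_B\wedge N)}=e^{-\alpha\,(d(A,B)\wedge N)},
\]
using $\essi_A(d_B\wedge N)=(\essi_A d_B)\wedge N=d(B,A)\wedge N$ and the symmetry of $d$. After this correction your chain of inequalities and the optimization $\alpha=(d(A,B)\wedge N)/s$ go through verbatim and yield the claim upon letting $N\to\infty$.
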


\begin{proof}
Suppose that $d(A,B)<\infty$. Let $\omega=d_A \wedge d(A,B)$. By
definition (\ref{intrinsicmetric}) $d(A,B)$ lies in $\D_0$ and $d_A$
as well by theorem (\ref{intrmetrik}). Hence we see $\omega \in
\D_0$. Let $\psi=\alpha \omega$ for an arbitrary fix $\alpha \in
\mathds{R}$ and $u_s=T_s \mathds{1}_A$. With this the assumptions of
the previous lemma are fulfilled and we get

\begin{equation} \label{gleichung1}
\|e^{\alpha \omega}T_s \mathds{1}_A\|_{L^2}\leq e^{\alpha^2
s/2}\|e^{\alpha \omega}\mathds{1}_A\|_{L^2} \stackrel{\omega\equiv
0\, on\, A}{=}e^{\alpha^2 s/2}\sqrt{m(A)}.
\end{equation}

Accordingly, for $v_s=T_s \mathds{1}_B$ we get

\begin{equation}\label{gleichung2}
\|e^{-\alpha \omega}T_s \mathds{1}_B\|_{L^2}\leq e^{\alpha^2
s/2}\|e^{-\alpha \omega}\mathds{1}_B\|_{L^2} \stackrel{\omega =
d(A,B) \, on\, B}{=}e^{\alpha^2 s/2-\alpha d(A,B)}\sqrt{m(B)}.
\end{equation}

Thus we know

\begin{eqnarray*}
\Pp(X_0 \in A,X_s \in B)&=&\|\mathds{1}_A T_s
\mathds{1}_B\|_{L^1}=\|e^{\alpha \omega}\mathds{1}_A e^{-\alpha
\omega}T_s\mathds{1}_B\|_{L^1}\\[1.5ex]
& \leq & \|e^{\alpha \omega}\mathds{1}_A\|_{L^2} \|e^{-\alpha
\omega}T_s\mathds{1}_B\|_{L^2}\\[1.5ex]
&=&\sqrt{m(A)}\sqrt{m(B)} \exp\big[\alpha^2 s/2-\alpha \, d(A,B)
\big].
\end{eqnarray*}

Because this is true for all $\alpha \in \mathds{R}$ we can optimize
in $\alpha$ and obtain for $\alpha = \frac{d(A,B)}{s}$

$$\Pp(X_0 \in A,X_s \in B)\leq \sqrt{m(A)}\sqrt{m(B)} e^{- \frac{d(A,B)}{2\cdot s}}.$$

From this, the claim follows in the case of finite distance.

\vspace{.2cm}

If $d(A,B)=\infty$, we set $\omega = d_A \wedge M$ and obtain

$$\Pp(X_0 \in A,X_s \in B)\leq \sqrt{m(A)}\sqrt{m(B)} e^{- \frac{M^2}{2\cdot s}}.$$

\vspace{.2cm}

For $M \to \infty$ we see $\Prob(X_0 \in A,X_s \in B)=0$ for $t \geq
0$ (cf. \cite{HR} or \cite{SII}).
\end{proof}
\vspace{.2cm}

 After this special case we want to prove the theorem
in a quite similar way. For this purpose we will repeatedly apply
the 'Integrated Gaussian estimates`

\vspace{0.5cm}

\begin{proof}(of Theorem \ref{thm})

For $\mathcal{A}=(A_0,A_1,\ldots,A_n) \in \X^{n+1}$ we want to show

$$\Pp(X_{.}\in \mathcal{A}_\Delta)\leq
\sqrt{m(A_0)}\sqrt{m(A_n)}\exp\big[- \frac{1}{2}\sum_{i=0}^{n-1}
\frac{d^2(A_i,A_{i+1})}{t_{i+1}-t_i}\big].$$
Like in the previous corollary first we assume
$d(A_i,A_{i+1})<\infty$. Define $\omega_i=d_{A_i}\wedge
d(A_i,A_{i+1})$ for $i=0,1,\ldots,n-1$. Then for arbitrary, but fix
$\alpha_i \in \mathds{R},\ i=0,1,\ldots,n-1$,

\begin{eqnarray*}
\lefteqn{ \Pp(X_{.}\in \mathcal{A}_{\Delta})= \|\mathds{1}_{A_0}
    T_{t_1-t_0} \big(\mathds{1}_{A_1} T_{t_2-t_1} \mathds{1}_{A_2}\ldots
        T_{t_{n}-t_{n-1}} \mathds{1}_{A_n}\big)\|_{L^1}{}}\\[2ex]
    &&=\|e^{\alpha_0 \omega_0}\,\mathds{1}_{A_0}
        T_{t_1-t_0}\,e^{-\alpha_0 \omega_0}\big(\mathds{1}_{A_1} T_{t_2-t_1}
        \mathds{1}_{A_2}\ldots
        T_{t_{n}-t_{n-1}} \mathds{1}_{A_n}\big)\|_{L^1}\\[2ex]
    &&\leq  \|\,e^{\alpha_0 \omega_0}\mathds{1}_{A_0}
        T_{t_1}\|_{L^2}\cdot \|e^{-\alpha_0 \omega_0}\, \mathds{1}_{A_1}
        T_{t_2-t_1} \big(
        \mathds{1}_{A_2}\ldots T_{t_{n}-t_{n-1}} \mathds{1}_{A_n}\big)\|_{L^2}\\[1.5ex]
    &&\stackrel{\ref{gleichung1}}{\leq} e^{\alpha_0 \frac{t_1}{2}}\sqrt{m(A_0)}\cdot \|e^{-\alpha_0
        \omega_0}\, e^{\alpha_1 \omega_1}\, \mathds{1}_{A_1} \,e^{-\alpha_1
        \omega_1}\, T_{t_2-t_1}\big(
        \mathds{1}_{A_2}\ldots T_{t_{n}-t_{n-1}} \mathds{1}_{A_n}\big)\|_{L^2}\\[1.5ex]
    &&\leq e^{\alpha_0^2 \frac{t_1}{2}}\sqrt{m(A_0)}\cdot \|e^{-\alpha_0
        \omega_0}\|_{L^{\infty}} \cdot \|\mathds{1}_{A_1} \,e^{\alpha_1
        \omega_1}\|_{L^{\infty}}\\[1.5ex]
    &&\qquad \cdot \ \|e^{-\alpha_1 \omega_1}\,T_{t_2-t_1}\big(
        \mathds{1}_{A_2}T_{t_3-t_2} \mathds{1}_{A_3}\ldots T_{t_{n}-t_{n-1}}
        \mathds{1}_{A_n}\big)\|_{L^2}\\[1.5ex]
    &&\leq \sqrt{m(A_0)} \exp\big[ \alpha_0^2 \frac{t_1}{2} - \alpha_0 d(A_0,A_1)\big]
        \cdot \exp\big[\alpha_1^2 \frac{t_2-t_1}{2}\big]\\[1.5ex]
    &&\qquad \cdot \ \|e^{-\alpha_1 \omega_1}\,
        \mathds{1}_{A_2}T_{t_3-t_2}\big( \mathds{1}_{A_3}\ldots T_{t_{n}-t_{n-1}}
        \mathds{1}_{A_n}\big)\|_{L^2}\\
    &&\leq\ldots\\
    &&\leq \sqrt{m(A_0)}\exp\big[ \alpha_0^2 \frac{t_1}{2} - \alpha_0
        d(A_0,A_1)\big]\cdot \ldots \cdot \exp\big[ \alpha_{n-2}^2
        \frac{t_{n-1}-t_{n-2}}{2} - \alpha_{n-2} d(A_{n-1},A_{n-2})\big]\\[1.5ex]
    && \qquad \cdot \exp\big[\alpha_{n-1}^2 \frac{t_n-t_{n-1}}{2}\big]
        \cdot \|e^{-\alpha_{n-1}\omega_{n-1}}\mathds{1}_{A_n}\|_{L^2}\\[1.5ex]
    &&\stackrel{\ref{gleichung2}}{\leq} \sqrt{m(A_0)}\sqrt{m(A_n)} \exp\Big[ \sum_{i=0}^{n-1}
        \alpha_i^ \frac{t_{i+1}-t_i}{2} - \alpha_i\, d(A_i,A_{i+1})\Big]
\end{eqnarray*}
As in the proof of corollary \ref{korollar} we minimise all
$\alpha_0,\ldots,\alpha_{n-1}$, and thus get the desired estimate
$$\Pp(X_{.}\in \mathcal{A}_{\Delta})\leq \sqrt{m(A_0)}\sqrt{m(A_n)}\exp\Big[-\frac{1}{2}
\sum_{i=0}^{n-1}\frac{d^2(A_i,A_{i+1})}{t_{i+1}-t_i}\Big].$$
\end{proof}

\section{Short--time Behaviour Controlled by the Energy of Curves}

Once again we recall some notation. $(X_t)_{t \geq 0}$ denotes the
Markov process on the probability space $(\Omega, \Pp)$  associated
with the Dirichlet form $\E$ and $X^s_t=X_{s\cdot t},\ t \in [0,1]$,
$s > 0$, the time--scaled process. For simplicity we assume
$\Omega=\mathcal{C}([0,1],\mathcal{X})$ and $X_t(\omega)=\omega(t)$.
Let $\Pp^s$ be the probability measure defined by $\Pp^s(X_t \in
\cdot)=\Pp(X_{s\cdot t} \in \cdot)$. If $\Delta=\{0=t_0 < t_1 <
\ldots < t_{n-1}< t_n=1\}$ is a partition of the unit interval
$[0,1]$, then we denote by $\Pi_{\Delta}$ the projection of a
function to their values at $t_0,t_1,\ldots,t_n$. As before for a
$(n+1)$--tuple $\A=(A_0,A_1,\ldots,A_{n+1}) \in \X^{n+1}$ we set

$$\A_\Delta=\Pi_\Delta^{-1}(\A)=\Big\{\gamma : [0,1] \to \X \mbox{ continuous } \mid \Pi_\Delta(\gamma) \in \A
\Big\}.$$

In this section we assume $\X$ to be pre--compact.

\subsection{Short--time Behaviour of Finite Dimensional Distribution}

First of all we want to control the short--time behaviour of the
finite dimensional distributions of $X_t$ by a discretization of the
energy functional. In our framework this energy functional is
defined as follows.

\begin{defn}
Let $\gamma : [0,t] \to \X$ be a continuous curve in $\X$. Then for
a partition $\Delta$ the discretized energy functional $H_{\Delta}$
of $\gamma$ is defined by

\begin{equation} \label{hdelta}
H_{\Delta}(\gamma)=\frac{1}{2}\sum_{i=0}^{n-1}\frac{d^2(\gamma(t_i),\gamma(t_{i+1}))}{t_{i+1}-t_{i}}.
\end{equation}
\end{defn}

Now we are able to formulate the main theorem of this subsection.

\begin{thm} \label{5.2}
Under the above conditions, we get
\begin{equation} \label{diskreteAbsch}
\limsup_{s \to 0}\,s \, \log \Pp^s(X_{\cdot} \in
\mathcal{A}_{\Delta}) \leq  -\inf_{\gamma \in
\mathcal{A}_{\Delta}}H_{\Delta}(\gamma).
\end{equation}

\end{thm}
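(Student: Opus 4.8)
The plan is to combine the finite‑dimensional upper bound of Theorem~\ref{thm} with an elementary covering argument that replaces a single tuple $\A=(A_0,\dots,A_n)$ by a set $\mathcal{A}_\Delta$ whose sections $A_i$ may be large. First I would rewrite the probability $\Pp^s(X_\cdot\in\mathcal{A}_\Delta)$ using the Markov property exactly as in \eqref{1}, so that it equals $\int_{A_0}\cdots\int_{A_n}T_{s(t_n-t_{n-1})}\cdots T_{s(t_1-t_0)}\,dm$, and note that Theorem~\ref{thm} applied to the time‑scaled process (equivalently, replacing each $t_{i+1}-t_i$ by $s(t_{i+1}-t_i)$) gives
$$\Pp^s(X_\cdot\in\mathcal{A}_\Delta)\le \sqrt{m(A_0)}\sqrt{m(A_n)}\,\exp\!\Big(-\frac{1}{2s}\sum_{i=0}^{n-1}\frac{d^2(A_i,A_{i+1})}{t_{i+1}-t_i}\Big).$$
Taking $s\log$ and letting $s\to0$ immediately yields $\limsup_{s\to0}s\log\Pp^s(X_\cdot\in\mathcal{A}_\Delta)\le-\tfrac12\sum_i d^2(A_i,A_{i+1})/(t_{i+1}-t_i)$ whenever the $A_i$ are small enough that $H_\Delta$ is essentially constant on $\mathcal{A}_\Delta$; the point of the theorem is to pass from this to the infimum over $\gamma\in\mathcal{A}_\Delta$ for general (not necessarily small) sections.

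The mechanism for that passage is a finite cover. Fix $\eps>0$. Using assumption (A) and pre‑compactness of $\X$, I would cover each $A_i$ by finitely many open balls $B^i_{1},\dots,B^i_{k_i}$ of $d$‑radius $\delta$, so that $\mathcal{A}_\Delta\subset\bigcup \big(B^0_{j_0}\times\cdots\times B^n_{j_n}\big)_\Delta$, a finite union over multi‑indices $(j_0,\dots,j_n)$. By subadditivity of $\Pp^s$ and the standard fact that $\limsup s\log(\sum_\ell a_\ell(s))=\max_\ell\limsup s\log a_\ell(s)$ for finitely many nonnegative functions, it suffices to bound each term. For a fixed multi‑index, either $\big(B^0_{j_0}\times\cdots\times B^n_{j_n}\big)_\Delta\cap\mathcal{A}_\Delta=\emptyset$ (then that term is zero and contributes nothing), or it is nonempty, in which case it contains some curve $\gamma$ with $\gamma(t_i)\in B^i_{j_i}$; then for every such curve, the triangle inequality for $d$ gives $d(B^i_{j_i},B^{i+1}_{j_{i+1}})\ge d(\gamma(t_i),\gamma(t_{i+1}))-2\delta\ge \inf_{\gamma'\in\mathcal{A}_\Delta}d(\gamma'(t_i),\gamma'(t_{i+1}))-2\delta$ — more carefully, since one only needs a lower bound in terms of the infimum of $H_\Delta$, I would argue that any curve through $\prod B^i_{j_i}$ that also meets $\mathcal{A}_\Delta$ forces $\sum_i d^2(B^i_{j_i},B^{i+1}_{j_{i+1}})/(t_{i+1}-t_i)\ge 2\inf_{\gamma\in\mathcal{A}_\Delta}H_\Delta(\gamma)-C(\delta)$ with $C(\delta)\to0$ as $\delta\to0$, the rate using only that $d$ is (locally) finite and uniformly continuous on the pre‑compact space. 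Applying the displayed finite‑dimensional bound to each nonempty box and using $m(B^i_{j_i})\le m(\X)=1$ then gives, for each multi‑index, $\limsup_{s\to0}s\log\Pp^s(X_\cdot\in(\textstyle\prod B^i_{j_i})_\Delta)\le-\inf_{\gamma\in\mathcal{A}_\Delta}H_\Delta(\gamma)+C(\delta)$.

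Putting the pieces together, $\limsup_{s\to0}s\log\Pp^s(X_\cdot\in\mathcal{A}_\Delta)\le-\inf_{\gamma\in\mathcal{A}_\Delta}H_\Delta(\gamma)+C(\delta)$ for every $\delta>0$, and letting $\delta\to0$ finishes the proof. I would also handle separately the easy cases: if $\inf_{\gamma\in\mathcal{A}_\Delta}H_\Delta(\gamma)=0$ the bound is trivial, and if some $d(A_i,A_{i+1})=\infty$ one uses the truncation $\omega_i=d_{A_i}\wedge M$ as in Corollary~\ref{korollar} to get an arbitrarily large exponent. The main obstacle I anticipate is making the covering step uniform: one must ensure the correction $C(\delta)$ genuinely tends to $0$ uniformly over all (finitely many) boxes and does not blow up where $d$ is large, which is exactly where pre‑compactness of $\X$ together with assumption (A) is used — on a pre‑compact space $d$ is bounded and uniformly continuous, so the balls used in the cover can be chosen with a single $\delta$ controlling all the triangle‑inequality errors at once. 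A secondary technical point is the measurability of the sets $(\prod B^i_{j_i})_\Delta$ and the validity of applying Theorem~\ref{thm} with open, rather than arbitrary measurable, sections, but open balls are measurable so this is routine.
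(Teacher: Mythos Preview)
Your approach is essentially the same as the paper's: start from the finite--dimensional Gaussian bound of Theorem~\ref{thm} applied to the rescaled times, cover the sections by finitely many sets of small $d$--diameter (using pre--compactness of $\X$), control the passage from set distances to point distances via the triangle inequality with an error $C(\delta)\to 0$, and conclude by the standard $\limsup s\log(\sum)=\max\limsup s\log$ argument. The paper organises this slightly differently --- it leaves the two endpoint sections $A_0,A_n$ uncovered and only refines the interior ones, and it first works out the case $n=2$ before doing the general step --- but the mechanism is identical.

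One small point to tighten: when you write ``either $(\prod B^i_{j_i})_\Delta\cap\mathcal{A}_\Delta=\emptyset$ (then that term is zero)'', note that the term you are bounding is $\Pp^s(X_\cdot\in(\prod B^i_{j_i})_\Delta)$, not the probability of the intersection, so it is not literally zero. What makes the case analysis work is that you can decompose $\Pp^s(X_\cdot\in\mathcal{A}_\Delta)\le\sum\Pp^s(X_\cdot\in\mathcal{A}_\Delta\cap(\prod B^i_{j_i})_\Delta)$, discard the empty intersections, and then dominate each surviving term by $\Pp^s(X_\cdot\in(\prod B^i_{j_i})_\Delta)$; equivalently, since each $B^i_{j_i}$ meets $A_i$ by construction of the cover, every box in the sum does admit a curve $\gamma\in\mathcal{A}_\Delta$ with $\gamma(t_i)\in B^i_{j_i}$, so the ``empty'' branch never actually occurs. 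With that clarification your error estimate $C(\delta)\lesssim\delta\cdot\diam(\X)\cdot\sum_i(t_{i+1}-t_i)^{-1}$ is exactly what the paper obtains, and the argument goes through.
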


\begin{proof}

By theorem (\ref{thm}) we have

\begin{equation}\label{epsilonabschaetzung}
\Pp^s(X_{.}\in \mathcal{A}_{\Delta}) \leq
\sqrt{m(A_0)}\sqrt{m(A_n)}\exp\left(-\frac{1}{2
s}\sum_{i=0}^{n-1}\frac{d^2(A_i,A_{i+1})}{t_{i+1}-t_i}\right).
\end{equation}

\vspace{.5cm}

\textbf{Step 1:}

For every $\varepsilon > 0$ we want to show the following estimate
for the subset $\mathcal{A}=(A,B,C) \in \X^3$

$$\limsup_{s \to 0}\,s \, \log \Pp^s(X_{\cdot} \in \A_\Delta) \leq - \inf_{\gamma \in
\mathcal{A}_{\Delta}}H_{\Delta}(\gamma)+\varepsilon.$$

\vspace{.2cm}

For this we first assume $\diam(B)\leq \delta=\delta(\varepsilon)$
($\diam(B)=\sup \{d(x,y)\,:\, x,y \in B\}$). Without loss of
generality we may assume $d(A,B)<\infty$ and $d(B,C)<\infty$,
otherwise with theorem (\ref{thm}) we would get
$$\Pp^s(X_{\cdot} \in \A_\Delta)\leq \sqrt{m(A)}\sqrt{m(C)}\exp \left(-\frac{1}{s}\big(d(A,B)+d(B,C)\big)\right)=0$$

and hence apparently (\ref{diskreteAbsch}).

For $m$--a.e. $b^* \in B$ we have
\begin{itemize}
\item $d(A,B)\geq d(A,b^*) -\delta$ \quad and \quad
      $d^2(A,B)\geq d^2(A,b^*) -2\delta d(A,b^*)$

\item $d(B,C)\geq d(b^*,C) -\delta$ \quad and \quad
      $d^2(B,C)\geq d^2(b^*,C) -2\delta d(b^*,C)$
\end{itemize}

and hence

\begin{eqnarray*}
\frac{d^2(A,B)}{t_1-t_0} + \frac{d^2(B,C)}{t_2-t_1} &\geq&  \frac{d^2(A,b^*)}{t_1-t_0} + \frac{d^2(b^*,C)}{t_2-t_1}
- 2\delta
\left[\frac{d(A,b^*)}{t_1-t_0}+\frac{d(b^*,C)}{t_2-t_1}\right]\\[1.5ex]
&\geq& \frac{d^2(A,b^*)}{t_1-t_0} + \frac{d^2(b^*,C)}{t_2-t_1} -
2\delta \left[\frac{d(A,B)}{t_1-t_0}+\frac{d(B,C)}{t_2-t_1} +
\frac{\delta(t_2-t_0)}{(t_1-t_0)(t_2-t_0)}\right]
\end{eqnarray*}

\vspace{.2cm}

Since this is true for $m$--a.e. $b^* \in B$ and $d(A,B)$ as well as
$d(B,C)$ are finite, we can choose $\delta = \delta(\varepsilon)$
appropriately to get the expected connection between the discretized
energy functional and the finite dimensional distributions:

\begin{eqnarray}\label{PABC}
\Pp^s(X_{.}\in \mathcal{A}_{\Delta}) &\leq& \sqrt{m(A)}\sqrt{m(C)}
    \exp\left(- \frac{1}{2 s} \left[\frac{d^2(A,B)}{t_1-t_0} +
    \frac{d^2(B,C)}{t_2-t_1}\right]\right)\\[1.5ex] \nonumber
&\leq & \sqrt{m(A)}\sqrt{m(C)} \exp\left( - \frac{1}{2 s} \essi_{b \in B}
    \Big\{\frac{d^2(A,b)}{t_1-t_0} + \frac{d^2(b,C)}{t_2-t_1}\Big\} +
    \varepsilon\right)\\[1.5ex] \nonumber
&\leq & \sqrt{m(A)}\sqrt{m(C)}\exp\left(-\frac{1}{s} \inf_{\gamma \in
    \mathcal{A}_{\Delta}}H_{\Delta}(\gamma)\right)
    \exp\left(\frac{\varepsilon}{s}\right) \label{Fall1a}
\end{eqnarray}

respectively

\begin{equation}\label{logPABC}
\lim_{s \to 0}s \log \Pp^s(X_{.}\in \mathcal{A}_{\Delta})\leq
-\inf_{\gamma \in \mathcal{A}_{\Delta}}H_{\Delta}(\gamma) +
\varepsilon.
\end{equation}

\vspace{.5cm}

From now on let $B$ be pre--compact, i.e. for all $\delta > 0$ there
exists a natural number $N$ and a family
$B^{(1)},B^{(2)},\ldots,B^{(N)}$ of subsets of $\X$, such that $B$
is contained in the union of the family and such that $\diam
(B^{(i)})\leq \delta$ holds for each $B^{(i)}$ in the family.
Therefore we get the estimates (\ref{PABC}) and (\ref{logPABC}),
respectively, for each $i\in \{1,2,\ldots,N\}$ and for $m$--a.e.
$b^{(i)} \in B^{(i)}$.

\begin{eqnarray*}
\Pp^s(X_{.}\in \mathcal{A}_{\Delta}) &\leq & \sum_{i=1}^{N}
    \Pp(X_{s\cdot t_0} \in A,X_{s\cdot t_1} \in B^{(i)},X_{s \cdot t_2}
    \in C)\\[1.5ex]
& \leq & \sqrt{m(A)} \sqrt{m(C)} \sum_{i=1}^N
    \exp\left(-\frac{1}{2s} \Big(\frac{d^2(A,B^{(i)})}{t_1-t_0} +
    \frac{d^2(B^{(i)},C)}{t_2-t_1}\Big)\right)\\[1.5ex]
& \leq & \sqrt{m(A)} \sqrt{m(C)} N \exp\left(-\frac{1}{2s} \inf_{i
    \in \{1,2,\ldots,N\}}\Big\{\frac{d^2(A,B^{(i)})}{t_1-t_0} +
    \frac{d^2(B^{(i)},C)}{t_2-t_1}\Big\}\right)\\[1.5ex]
& \leq & \sqrt{m(A)} \sqrt{m(C)} N \exp\left(-\frac{1}{s}
    \inf_{\gamma \in
    \mathcal{A}_{\Delta}}H_{\Delta}(\gamma)\right)\\[1.5ex]
& & \cdot \exp \left(  \frac{\delta}{2s} \max_{i \in
    \{1,2,\ldots,N\}} \esss_{b^{(i)} \in B^{(i)}} \Big\{\frac{d(A,b^{(i)})}{t_1-t_0} +
    \frac{d(b^{(i)},C)}{t_2-t_1}\Big\}\right)\\[1.5ex]
& \leq & \sqrt{m(A)} \sqrt{m(C)} N \exp\left(-\frac{1}{s}
    \inf_{\gamma \in
    \mathcal{A}_{\Delta}}H_{\Delta}(\gamma)\right)\\[1.5ex]
& & \cdot \exp \left( \frac{\delta}{2s}
    \Big(\frac{d(A,B)}{t_1-t_0}+\frac{d(B,C)}{t_2-t_1}+\diam(B)\frac{t_2-t_0}{(t_1-t_0)(t_2-t_1)}\Big)\right).
\end{eqnarray*}

\vspace{.2cm}

As before we can choose $\delta=\delta(\varepsilon)$ appropriately
to get

$$\lim_{s\to 0}s \log \Pp^s(X_{.}\in \mathcal{A}_{\Delta}) \leq -\inf_{\gamma \in \mathcal{A}_{\Delta}}H_{\Delta}(\gamma) +\varepsilon.$$

\vspace{.2cm}

\begin{rem}
This is the first time we need some more assumptions on the
underlaying space $\X$, namely that all subsets of $\X$ are
pre--compact.
\end{rem}

\textbf{Step 2:}

Now we are looking at the $n+1$--tuple
$\mathcal{A}=(A_0,A_1,\ldots,A_{n+1})\in \X^{n+1}$ assuming that all
the distances $d(A_i,A_{i+1})$, $i \in \{0,1,\ldots,n\}$, are finite
and, as before, $\diam(A_i) \leq \delta$, for $i \in
\{1,2,\ldots,n-1\}$ and arbitrary $\delta
> 0$. Let $\Delta=\{0=t_0<t_1<t_2<\ldots <t_n=1\}$ be a partition of the unit
interval $[0,1]$. Then for $m$--a.e. $a_i \in A_i$ the following
three types of estimates are satisfied:

\begin{itemize}

\item[(i)] $d(A_0,A_1) \geq d(A_0,a_1) -\delta$ \quad and
\quad $d^2(A_0,A_1) \geq d^2(A_0,a_1) -2\delta d(A_0,a_1)$

\item[(ii)] $d(A_{n},A_{n+1}) \geq d(a_{n},A_{n+1}) -\delta$ \quad and
\quad $d^2(A_n,A_{n+1}) \geq d^2(a_n,A_{n+1}) -2\delta
d(a_{n},A_{n+1})$

\item[(iii)] $d(A_i,A_{i+1}) \geq d(a_i,a_{i+1}) -2\delta$ \quad and
\quad $d^2(A_i,A_{i+1}) \geq d^2(a_i,a_{i+1}) -4\delta
d(a_i,a_{i+1})\\ \forall i=1,\ldots,n-1.$
\end{itemize}

So we get the estimate

\begin{eqnarray*}
\lefteqn{\frac{d^2(A_0,A_1)}{t_1-t_0} +
    \sum_{i=1}^{n-1}\frac{d^2(A_i,A_{i+1})}{t_{i+1}-t_i} +
    \frac{d^2(A_n,A_{n+1})}{t_{n+1}-t_n} }\\[1.5ex]
& \geq &\min_{a_i \in A_i,\ i\in
    \{1,2,\ldots,n\}}\left\{\frac{d^2(A_0,a_1)}{t_1-t_0}+
    \sum_{i=1}^{n-1} \frac{d^2(a_i,a_{i+1})}{t_{i+1}-t_i} + \frac{d^2(a_n,A_{n+1})}{t_{n+1}-t_n}\right.\\[1.5ex]
& & \hspace{3cm} \left. - 2\delta
    \left(\frac{d(A_0,a_1)}{t_1-t_0}+ \sum_{i=1}^{n-1} \frac{2d(a_i,a_{i+1})}{t_{i+1}-t_i} +
    \frac{d(a_n,A_{n+1})}{t_{n+1}-t_n}\right)\right\}\\[1.5ex]
& \geq &\min_{a_i \in A_i,\ i\in
    \{1,2,\ldots,n\}}\left\{\frac{d^2(A_0,a_1)}{t_1-t_0}+
    \sum_{i=1}^{n-1} \frac{d^2(a_i,a_{i+1})}{t_{i+1}-t_i} +
    \frac{d^2(a_n,A_{n+1})}{t_{n+1}-t_n}\right\}\\[1.5ex]
&&  \hspace{3cm} -2\delta \left(\frac{d(A_0,A_1)}{t_1-t_0}+
    \sum_{i=1}^{n-1} \frac{2d(A_i,A_{i+1})}{t_{i+1}-t_i} + \frac{d(A_n,A_{n+1})}{t_{n+1}-t_n}\right)\\[1.5ex]
&&  \hspace{3cm} - 2\delta^2
    \left(\frac{1}{t_1-t_0}+\sum_{i=1}^{n-1}
    \frac{4}{t_i-t_{i+1}}+\frac{1}{t_{n+1}-t_n}\right).
\end{eqnarray*}

Since the distances $d(A_i,A_{i+1})$ are all finite by assumption,
$\delta=\delta(\varepsilon)$ can be chosen appropriately in
dependency on $\varepsilon$ such that:

\begin{eqnarray} \label{beschDIAM}
\lefteqn{\Pp^s(X_{\cdot}\in \mathcal{A}_{\Delta})=\Pp(X_{s \cdot
        t_0} \in A_0,X_{s \cdot t_1}\in A_1,\ldots,X_{s \cdot t_{n+1}}\in
        A_{n+1})}\\[1.5ex]\nonumber
& \leq & \sqrt{m(A_0)} \sqrt{m(A_{n+1})} \exp\left[- \frac{1}{2s}
    \left(\frac{d^2(A_0,A_1)}{t_1-t_0} + \sum_{i=1}^{n-1} \frac{d^2(A_i,A_{i+1})}{t_{i+1}-t_i} +
    \frac{d^2(A_n,A_{n+1})}{t_{n+1}-t_n}\right) \right] \\[1.5ex]
    \nonumber
&\leq& \sqrt{m(A_0)} \sqrt{m(A_{n+1})} \exp\left[- \frac{1}{s}
    \inf_{\gamma \in \mathcal{A}_{\Delta}}H_{\Delta}(\gamma) +
    \frac{\varepsilon}{s}\right]
\end{eqnarray}

and hence

\begin{equation*} \label{logviermengen}
\lim_{s \to 0} s \log \Pp^s(X_{\cdot}\in \mathcal{A}_{\Delta}) \leq
-\inf_{\gamma \in \mathcal{A}_{\Delta}}H_{\Delta}(\gamma) +
\varepsilon.
\end{equation*}

To relax the assumption on $A_i$, $i \in \{1,2,\ldots,n\}$, that all
of the subsets $A_i \subset \X$ have to be of diameter smaller then
$\delta$, let each $A_i$ be pre--compact. That is for all $\delta >
0$ there exist natural numbers $N_1,N_2,\ldots,N_n$ and families
$B_i^{(j)}$ of subsets of $\X,\ i \in \{1,2,\ldots,n\}, j\in
\{1,2,\ldots,N_i\}$, such that for each $i$ the family $B_i^{(j)},\
j\in \{1,2,\ldots,N_i\}$, is a finite cover of $B_i$

With the previous estimates we have

\begin{eqnarray*}
\lefteqn{\Pp^s(X_{\cdot}\in \mathcal{A}_{\Delta})}\\[1.5ex]
    &\leq&\sum_{j_i=1}^{N_1}\sum_{j_2=1}^{N_2}\ldots\sum_{j_n=1}^{N_{n}}
    \Pp^s(X_{t_0} \in A_0,X_{t_1} \in A_1^{(j_1)},X_{t_2} \in A_2^{(j_2)},\ldots,X_{t_{n}} \in A_{n}^{(j_n)},X_{t_{n+1}} \in
    A_{n+1})\\[1.5ex]
&\leq& \sqrt{m(A_0)} \sqrt{m(A_{n+1})}
    \sum_{j_i=1}^{N_1}\sum_{j_2=1}^{N_2}\ldots\sum_{j_n=1}^{N_{n}} \exp\left[-\frac{1}{2s}
    \left(
    \frac{d^2(A_0,A_1^{(j_1)})}{t_1-t_0} + \right.\right.\\[1.5ex]
&&  \hspace{7.5cm}\left.\left.+\sum_{i=1}^{n-1}
    \frac{d^2(A_i^{(j_i)},A_{i+1}^{(j_{i+1})})}{t_{i+1}-t_1} +
    \frac{d^2(A_{n}^{(i_n)},A_{n+1})}{t_{n+1}-t_{n}}\right)\right]\\[1.5ex]
&\leq& \sqrt{m(A_0)} \sqrt{m(A_{n+1})}\ N_1 \cdot N_2 \cdot \ldots
    \cdot N_{n}\\[1.5ex]
&&\cdot \exp\left[-\frac{1}{2s}
    \min_{A_1^{(j_1)},A_2^{(j_2)},\ldots,A_{n}^{(j_n)}}\left\{
    \frac{d^2(A_0,A_1^{(j_1)})}{t_1-t_0}
    +\sum_{i=1}^{n-1} \frac{d^2(A_i^{(j_i)},A_{i+1}^{(j_{i+1})})}{t_{i+1}-t_1} +
    \frac{d^2(A_{n}^{(i_n)},A_{n+1})}{t_{n+1}-t_{n}}\right\}\right]\\[1.5ex]
&\leq&  \sqrt{m(A_0)} \sqrt{m(A_{n+1})}\ N_1 \cdot N_2 \cdot \ldots
    \cdot N_{n} \cdot \exp\left[-\frac{1}{s}\inf_{\gamma \in
    \mathcal{A}_{\Delta}}H_{\Delta}(\gamma)\right]\\[1.5ex]
&&\cdot \exp\left[\max_{A_1^{(j_1)},A_2^{(j_2)},\ldots,A_{n}^{(j_n)}}\left\{\frac{\delta}{s} \Big(\frac{d(A_0,A_1)}{t_1-t_0}+
    \sum_{i=1}^{n-1}\frac{2d(A_i,A_{i+1})}{t_{i+1}-t_i} + \frac{d(A_{n},A_{n+1})}{t_{n+1}-t_{n}} \Big)\right\}\right]\\[1.5ex]
&&\cdot \exp\left[ \frac{\delta^2}{s}
        \Big(\frac{1}{t_1-t_0}+\sum_{i=1}^{n-1}\frac{4}{t_{i+1}-t_i}+\frac{1}{t_{n+1}-t_{n}}\Big)\right]\\[1.5ex]
&\leq&  \sqrt{m(A_0)} \sqrt{m(A_{n+1})}\ N_1 \cdot N_2 \cdot \ldots
    \cdot N_{n} \cdot \exp\left[-\frac{1}{s}\inf_{\gamma \in
    \mathcal{A}_{\Delta}}H_{\Delta}(\gamma)\right]\\[1.5ex]
&&\cdot \exp\left[\frac{\delta}{s} \Big(\frac{d(A_0,A_1)}{t_1-t_0}+
    \sum_{i=1}^{n-1}\frac{2d(A_i,A_{i+1})}{t_{i+1}-t_i} + \frac{d(A_{n},A_{n+1})}{t_{n+1}-t_{n}} +3\sum_{i=1}^{n-1}
    \diam(A_i)\Big)\right]\\[1.5ex]
&&\cdot \exp\left[ \frac{\delta^2}{s}
        \Big(\frac{1}{t_1-t_0}+\sum_{i=1}^{n-1}\frac{4}{t_{i+1}-t_i}+\frac{1}{t_{n+1}-t_{n}}\Big)\right]\\[1.5ex]
&\leq& \sqrt{m(A_0)}\sqrt{m(A_{n+1})}\  N_1 \cdot N_2 \cdot \ldots
    \cdot N_{n} \cdot \exp\left[- \frac{1}{s} \inf_{\gamma \in
    \mathcal{A}_{\Delta}}H_{\Delta}(\gamma) +
    \frac{\varepsilon}{s}\right],
\end{eqnarray*}

after a appropriate choice of $\delta(\varepsilon)$. (We can choose
$\delta$ in such a way, because all subsets $A_i$ for $i\in
\{1,2,\ldots,n\}$ are pre--compact and hence $\diam(A_i)$ is
finite.) Here $\min_{A_1^{(j_1)},A_2^{(j_2)},\ldots,A_{n}^{(j_n)}}$
means that we minimize for each $i$ over all possible $A_i^{(j_i)}$,
$j \in \{1,2,\ldots,N_i\}$. Hence for each $\varepsilon >0$ the
following inequality holds:

\begin{equation} \label{fast}
\lim_{s \to 0} s \log \Pp^s(X_{\cdot}\in \mathcal{A}_{\Delta})
\leq -\inf_{\gamma \in \mathcal{A}_{\Delta}} H_{\Delta}(\gamma) +
\varepsilon.
\end{equation}

As the estimate \ref{fast} is true for all $\varepsilon > 0$ the
theorem \ref{thm} is proven.
\end{proof}

\subsection{Short--time behaviour of the Markov process}

Until now we described the asymptotic short--time behaviour of the
finite dimensional distributions of the Markov process $X_t$
associated with the Dirichlet form $\E$ via the discretized Energy
functional $H_\Delta$. In the following we want to lift up this
results to an estimate of the short--time behaviour of the law of
$X_t$ itself. For this we will apply a version of the theorem of
Dawson--G\"artner. This yields the weak Large Deviation Principle
(LDP) in a space $\Y$ as a consequence of the LDP's in $\Y_i$, where
$\Y$ is the projective limit of the projective system $\Y_i$.

To formulate the theorem of Dawson--G\"artner precisely we have to
recall some well known concepts. We mention that a LDP describes the
asymptotic behaviour, as $\varepsilon \to \infty$, of a family of
probability measures $\{\mu_\varepsilon\}$ on $(\Omega,\mathcal{B})$
in terms of a rate function, where a rate function is defined as
follows.

\begin{defn}
A function $I : \Omega \to [0,\infty]$ is called a rate function if
it is lower semi--continuous.

We say that a function $I : \Omega \to [0,\infty]$ is a good rate
function, if $I$ is lower semi--continuous and for all $\alpha \in
[0,\infty)$ the level sets $\psi_I(\alpha)=\{x \in \Omega : I(x)\leq \alpha\}$ are
compact subsets of $\Omega$.
\end{defn}

For any set $\Gamma$, $\overline{\Gamma}$ denotes the closure of
$\Gamma$ and $\Gamma^\circ$ the interior of $\Gamma$. Then we say

\begin{defn}
The family $\{\mu_\varepsilon\}$ of probability measures satisfies
the LDP with good rate function $I$ if, for all subsets $\Gamma \in
\mathcal{B}$,

$$-\inf_{\omega \in \Gamma^\circ}I(\omega) \leq \liminf_{\varepsilon \to 0}
\varepsilon \log \mu_\varepsilon (\Gamma)
\leq \limsup_{\varepsilon \to 0} \varepsilon \log \mu_\varepsilon
(\Gamma) \leq -\inf_{\omega \in \overline{\Gamma}}I(\omega).$$

The infimum of a function over an empty set is interpreted as
$\infty$.
\end{defn}

There is an other weaker form of a LDP where the upper bound is proven only for compact sets.

\begin{defn}
A family of probability measures $\{\mu_\varepsilon\}$ is said to satisfy the weak LDP with rate function $I$ if the upper bound
\begin{equation}
\limsup_{\varepsilon \to 0} \varepsilon \log \mu_{\varepsilon}(\Gamma)\leq -\alpha
\end{equation}
holds for all $\alpha < \infty$ and all compact subsets $\Gamma$ of the complement of level sets $\psi_I(\alpha)^C$ and the lower bound
\begin{equation}
\liminf_{\varepsilon \to 0} \varepsilon \log \mu_{\varepsilon}(\Gamma)\geq -I(x)
\end{equation}
holds for any $x\in \{y : I(y)<\infty\}$ and all measurable $\Gamma$ with $x \in \Gamma^{\circ}$.
\end{defn}

Let $J$ be a partial ordered set and $\{(\Y_j,p_{ij})\}_{i\leq j \in
\N}$ be a projective system, i.e. $\{\Y_j\}_{j \in J}$ is a family
of Hausdorff topological spaces and the continuous maps $p_{ij} :
\Y_j \to \Y_i$ satisfy $p_{ik}=p_{ij}\circ p_{jk}$ for all $i\leq
j\leq k$. Let $\Y = \lim\limits_{\longleftarrow}\Y_j$ be the
projective limit of this system, that is $\Y$ consists of all the
elements $\mathbf{y}=(y_j)_{j\in J}$ for which $y_i=p_{ij}(y_j)$
whenever $i<j$. Then the statement of the theorem of
Dawson--G\"artner reads as

\begin{thm} (Dawson--G\"artner) (cf. \cite{dembo}) \label{dawsongaertner}\\
Let $\{\mu_\varepsilon\}$ be a family of probability measures on
$\Y$. Assume that, for each $j\in J$, the family of push--forward
measures $\{{p_j}_* \mu_\varepsilon\}$ on $\Y_j$ satisfy the LDP
with good rate function $I_j : \Y_j \to [0,\infty]$. Then the family
$\{\mu_\varepsilon\}$ satisfies the LDP on $\Y$ with good rate
function $I : \Y \to [0,\infty]$ given by

$$I(\mathbf{y})=\sup_{j \in J}\{I_j(p_j(\mathbf{y}))\},\quad \mathbf{y} \in \Y.$$

\end{thm}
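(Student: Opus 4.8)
The plan is to prove the three ingredients of a full LDP on the projective limit in turn: that $I$ is a good rate function, the lower bound, and the upper bound, the last of which carries essentially all of the difficulty. Before that I would fix conventions and record two preliminaries. I assume (as is customary, and as \cite{dembo} does) that $J$ is directed, since this is what makes the sets $p_j^{-1}(U)$, $U\subseteq\Y_j$ open, an actual \emph{base} of the topology of $\Y$: a finite intersection $\bigcap_k p_{j_k}^{-1}(U_k)$ equals $p_j^{-1}\big(\bigcap_k p_{j_k j}^{-1}(U_k)\big)$ for any common upper bound $j$ of the $j_k$. I would also note that $\Y$, realised as $\{(y_j)\in\prod_j\Y_j : y_i=p_{ij}(y_j)\text{ for }i\le j\}$, is a closed subset of $\prod_j\Y_j$, being an intersection of equaliser sets of continuous maps into Hausdorff spaces. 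Finally, since $p_i=p_{ij}\circ p_j$, the map $p_{ij}$ transports $p_{j*}\mu_\eps$ to $p_{i*}\mu_\eps$; by the contraction principle this gives an LDP for $\{p_{i*}\mu_\eps\}$ with good rate $z\mapsto\inf\{I_j(w):p_{ij}(w)=z\}$, and since the good rate function of an LDP is unique, one obtains the compatibility $I_i(p_{ij}(w))\le I_j(w)$ for $i\le j$, whence $p_{ij}(\psi_{I_j}(\alpha))\subseteq\psi_{I_i}(\alpha)$ for every $\alpha$.

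For goodness: $I=\sup_j I_j\circ p_j$ is lower semicontinuous, being a supremum of compositions of the continuous $p_j$ with the lower semicontinuous $I_j$; and $\psi_I(\alpha)=\bigcap_j p_j^{-1}(\psi_{I_j}(\alpha))$ is, under the identification above, exactly $\Y\cap\prod_j\psi_{I_j}(\alpha)$, which is compact by Tychonoff's theorem (each $\psi_{I_j}(\alpha)$ is compact, hence closed) intersected with the closed set $\Y$. For the lower bound, let $G\subseteq\Y$ be open and $\mathbf y\in G$; choose $j$ and open $U\subseteq\Y_j$ with $\mathbf y\in p_j^{-1}(U)\subseteq G$. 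Then $\mu_\eps(G)\ge p_{j*}\mu_\eps(U)$, so the lower bound on $\Y_j$ gives $\liminf_{\eps\to0}\eps\log\mu_\eps(G)\ge-\inf_{z\in U}I_j(z)\ge-I_j(p_j(\mathbf y))\ge-I(\mathbf y)$, and taking the supremum over $\mathbf y\in G$ finishes this part.

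The substantial step is the upper bound. For measurable $\Gamma\subseteq\Y$ put $C_j:=\overline{p_j(\Gamma)}$, a closed subset of $\Y_j$. Since $\Gamma\subseteq p_j^{-1}(C_j)$, the upper bound on $\Y_j$ yields $\limsup_{\eps\to0}\eps\log\mu_\eps(\Gamma)\le-\inf_{z\in C_j}I_j(z)$ for each $j$, hence $\limsup_{\eps\to0}\eps\log\mu_\eps(\Gamma)\le-\lambda$ with $\lambda:=\sup_j\inf_{z\in C_j}I_j(z)$. It remains to identify $\lambda$ with $\inf_{\mathbf y\in\overline{\Gamma}}I(\mathbf y)$. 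Using once more that the $p_j^{-1}(\text{open})$ form a base, one checks $\overline{\Gamma}=\bigcap_j p_j^{-1}(C_j)$; thus any $\mathbf y\in\overline{\Gamma}$ satisfies $p_j(\mathbf y)\in C_j$ for all $j$, so $I(\mathbf y)=\sup_j I_j(p_j(\mathbf y))\ge\sup_j\inf_{z\in C_j}I_j(z)=\lambda$, i.e. $\inf_{\overline{\Gamma}}I\ge\lambda$. For the reverse inequality assume $\lambda<\infty$ and set $K_j:=C_j\cap\psi_{I_j}(\lambda)$: each $K_j$ is compact (closed intersected with compact) and nonempty (the lower semicontinuous $I_j$, having compact sublevel sets, attains its infimum $\le\lambda$ on the closed set $C_j$), and by the compatibility relation together with $p_{ij}(C_j)\subseteq C_i$ the transition maps restrict to $K_j\to K_i$, so $\{K_j\}$ is a projective system of nonempty compact Hausdorff spaces over the directed set $J$ and therefore has nonempty projective limit. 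Any point $\mathbf z$ of that limit lies in $\Y$, has $p_j(\mathbf z)\in C_j$ for all $j$ (so $\mathbf z\in\overline{\Gamma}$) and $I(\mathbf z)=\sup_j I_j(p_j(\mathbf z))\le\lambda$; hence $\inf_{\overline{\Gamma}}I\le\lambda$. Combining the two inequalities gives $\limsup_{\eps\to0}\eps\log\mu_\eps(\Gamma)\le-\inf_{\mathbf y\in\overline{\Gamma}}I(\mathbf y)$, which together with the lower bound and the goodness of $I$ is the asserted LDP; the rate function has the stated form by construction.

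The hard part is exactly this identification $\lambda=\inf_{\overline{\Gamma}}I$. The $j$-wise estimates only control the minimal energy of the projections $p_j(\Gamma)$, and one must assemble approximate minimisers living on the various factors $\Y_j$ into a single curve in $\Y$ that still lies in the closure of $\Gamma$. The three devices that make this possible — the automatic compatibility $I_i\circ p_{ij}\le I_j$ forced by uniqueness of rate functions, the identity $\overline{\Gamma}=\bigcap_j p_j^{-1}(\overline{p_j(\Gamma)})$, and the nonemptiness of inverse limits of nonempty compact Hausdorff spaces over a directed index set — all enter precisely here, and the argument genuinely uses both the directedness of $J$ and the goodness (compact sublevel sets) of each $I_j$.
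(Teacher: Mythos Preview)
The paper does not actually prove this theorem: it is stated with the attribution ``(cf.\ \cite{dembo})'' and no argument is given. Only the weak variant (Corollary~\ref{weakdawsongaertner}) receives a proof in the paper, and that proof explicitly appeals to the argument of \cite{dembo} and to its Theorem~B.4 on nonemptiness of projective limits of nonempty compacts.

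Your proof is correct and is, in essence, the standard Dembo--Zeitouni argument: goodness via Tychonoff on $\prod_j\psi_{I_j}(\alpha)$, the lower bound from the cylinder base, and the upper bound by showing $\sup_j\inf_{C_j}I_j=\inf_{\overline{\Gamma}}I$ through the projective limit of the nonempty compacts $K_j=C_j\cap\psi_{I_j}(\lambda)$. The compatibility $I_i\circ p_{ij}\le I_j$ that you extract from the contraction principle and uniqueness of good rate functions is exactly what is needed to make $\{K_j\}$ a projective system. The paper's proof of the weak corollary is organised slightly differently---it fixes $\alpha$, takes $\Gamma$ compact with $\Gamma\cap\psi_I(\alpha)=\emptyset$, and argues contrapositively that some $\Gamma_j\cap\psi_{I_j}(\alpha)$ must already be empty---but the decisive tool (Theorem~B.4 of \cite{dembo}) is the same one you invoke.
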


\begin{rem}
For the lower bound it is not necessary to assume the functional $I$ to be a good rate function, i.e. we do not have to assume that all the level sets are compact. On the other hand for the upper bound it is crucial assumption that they are all compact.
\end{rem}

To abolish having not a good rate function we can formulate the following corollary

\begin{cor} \label{weakdawsongaertner}
Let $\{\mu_\varepsilon\}$ be a family of probability measures on
$\Y$. Assume that, for each $j\in J$, the family of push--forward
measures $\{{p_j}_* \mu_\varepsilon\}$ on $\Y_j$ satisfy the weak LDP
with rate function $I_j : \Y_j \to [0,\infty]$. Then the family
$\{\mu_\varepsilon\}$ satisfies the weak LDP on $\Y$ with rate
function $I : \Y \to [0,\infty]$ given by

$$I(\mathbf{y})=\sup_{j \in J}\{I_j(p_j(\mathbf{y}))\},\quad \mathbf{y} \in \Y.$$

\end{cor}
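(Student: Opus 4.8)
The plan is to mimic the proof of the Dawson--G\"artner theorem (Theorem \ref{dawsongaertner}), replacing every appeal to compactness of level sets and to the full upper bound by the corresponding weak statements, and to check that nothing in the lower bound argument used the goodness of the $I_j$ at all. I would first record the easy half: the lower bound. For $\mathbf{y} \in \Y$ with $I(\mathbf{y}) < \infty$ and a measurable $\Gamma \subset \Y$ with $\mathbf{y} \in \Gamma^{\circ}$, basic properties of the projective limit topology give a $j \in J$ and an open $U_j \subset \Y_j$ with $\mathbf{y} \in p_j^{-1}(U_j) \subset \Gamma$, hence $p_j(\mathbf{y}) \in U_j^{\circ} = U_j$; since $I_j(p_j(\mathbf{y})) \le I(\mathbf{y}) < \infty$, the weak LDP lower bound for $\{{p_j}_*\mu_\varepsilon\}$ yields $\liminf_\varepsilon \varepsilon \log \mu_\varepsilon(\Gamma) \ge \liminf_\varepsilon \varepsilon \log {p_j}_*\mu_\varepsilon(U_j) \ge -I_j(p_j(\mathbf{y})) \ge -I(\mathbf{y})$. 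This uses only the definition of $I$ as a supremum and the projective-limit topology, so the weakening causes no trouble here.

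For the upper bound I would fix $\alpha < \infty$ and a \emph{compact} $\Gamma \subset \psi_I(\alpha)^C$, i.e. $I(\mathbf{y}) > \alpha$ for every $\mathbf{y} \in \Gamma$. For each such $\mathbf{y}$ there is, by definition of $I$, an index $j = j(\mathbf{y})$ with $I_j(p_j(\mathbf{y})) > \alpha$; since $I_j$ is lower semicontinuous, $\{z \in \Y_j : I_j(z) > \alpha\} = \psi_{I_j}(\alpha)^C$ is open, so there is an open neighbourhood $V_{\mathbf{y}} = p_j^{-1}(W_{\mathbf{y}})$ of $\mathbf{y}$ in $\Y$ whose image $\overline{W_{\mathbf{y}}} \cap p_j(\Gamma)$ we can arrange (shrinking $W_{\mathbf{y}}$) to have closure still contained in $\psi_{I_j}(\alpha)^C$. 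By compactness of $\Gamma$ finitely many $V_{\mathbf{y}_1}, \ldots, V_{\mathbf{y}_m}$ cover $\Gamma$; let $j_1, \ldots, j_m$ be the associated indices. Each $\overline{W_{\mathbf{y}_k}}$ is a \emph{compact} (being a closed subset of a compact set — here I would pass to the image $p_{j_k}(\Gamma)$, which is compact, and note $\overline{W_{\mathbf{y}_k}}$ can be taken compact inside it) subset of $\psi_{I_{j_k}}(\alpha)^C$, so the weak LDP upper bound in $\Y_{j_k}$ gives $\limsup_\varepsilon \varepsilon\log {p_{j_k}}_*\mu_\varepsilon(\overline{W_{\mathbf{y}_k}}) \le -\alpha$. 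Then
\begin{equation*}
\limsup_{\varepsilon \to 0} \varepsilon \log \mu_\varepsilon(\Gamma)
\le \limsup_{\varepsilon \to 0} \varepsilon \log \sum_{k=1}^m \mu_\varepsilon(V_{\mathbf{y}_k})
\le \max_{1 \le k \le m} \limsup_{\varepsilon \to 0} \varepsilon \log {p_{j_k}}_*\mu_\varepsilon(\overline{W_{\mathbf{y}_k}})
\le -\alpha,
\end{equation*}
where the middle inequality is the standard ``finite sums cost nothing on the log scale'' fact. Since $\alpha < \infty$ and the compact $\Gamma \subset \psi_I(\alpha)^C$ were arbitrary, $\{\mu_\varepsilon\}$ satisfies the weak LDP upper bound with rate $I$.

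The main obstacle, and the only place requiring care, is the compactness bookkeeping in the upper bound: in the original Dawson--G\"artner argument one covers a closed set and uses goodness of $I_j$ to get compact pieces in $\Y_j$, whereas here $\Gamma$ is assumed compact from the start, so I must make sure the finite subcover is extracted at the level of $\Y$ (legitimate since $\Gamma$ is compact) and that each local piece, after projecting, lands in a compact subset of the appropriate $\psi_{I_{j_k}}(\alpha)^C$ — which works precisely because $p_{j_k}(\Gamma)$ is the continuous image of a compact set, hence compact, and $\psi_{I_{j_k}}(\alpha)^C$ is open by lower semicontinuity of $I_{j_k}$. One should also check that $I$ is itself lower semicontinuous (a supremum of the l.s.c. functions $I_j \circ p_j$, each continuous composed with l.s.c., hence l.s.c.), so that it is a legitimate rate function and the sets $\psi_I(\alpha)$ are well-behaved; this is immediate and requires no compactness.
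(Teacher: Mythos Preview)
Your argument is correct, but it follows a different route from the paper's. The paper does not cover $\Gamma$ by finitely many cylinder neighbourhoods. Instead it argues via the projective--limit structure: it observes that $\psi_I(\alpha)=\lim_{\longleftarrow}\psi_{I_j}(\alpha)$ and that, with $\Gamma_j:=p_j(\Gamma)$ compact, one has $\Gamma\cap\psi_I(\alpha)=\lim_{\longleftarrow}\bigl(\Gamma_j\cap\psi_{I_j}(\alpha)\bigr)$; then the emptiness of this projective limit of compact sets forces, by the finite--intersection property (Theorem~B.4 in \cite{dembo}), the existence of a \emph{single} index $j$ with $\Gamma_j\cap\psi_{I_j}(\alpha)=\emptyset$, so that $\Gamma_j$ is a compact subset of $\psi_{I_j}(\alpha)^{C}$ and one applies the weak upper bound once in $\Y_j$. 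Your approach trades this projective--limit lemma for an open--cover/finite--subcover argument at the level of $\Y$ together with the ``finitely many terms cost nothing on the log scale'' principle; it is more elementary in that it avoids the abstract Theorem~B.4, but it requires more bookkeeping (several indices $j_1,\dots,j_m$, and the regularity step you sketch---legitimate because $p_{j_k}(\Gamma)$ is compact Hausdorff, hence normal---to obtain compact pieces with closures inside $\psi_{I_{j_k}}(\alpha)^{C}$). Both routes exploit exactly the same two ingredients: compactness of $\Gamma$ (hence of each $p_j(\Gamma)$) and lower semicontinuity of the $I_j$.
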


\begin{proof}
The proof works most like the proof of the theorem (\ref{dawsongaertner}) of Dawson and G\"artner, for the lower bound it is exactly the same. For the upper bound first we get $\psi_{I_i}(\alpha)=p_{ij}\left(\psi_{I_i}(\alpha)\right)$ for all $i<j$ because all of the level sets $\psi_{I_j}(\alpha)$ of $I_j$ are closed subsets of $\mathcal{Y}_j$. Hence we get
$$\psi_I(\alpha)= \lim_{\longleftarrow} \psi_{I_j}(\alpha),$$
and $\psi_I(\alpha)$ as the projective limit of closed sets is itself a closed subset of $\mathcal{Y}$.

Now we take a compact subset $\Gamma \subset \mathcal{Y}$ and consider the projections $\Gamma_j:=p_j(\Gamma)$, since $p_j: \mathcal{Y}\to \mathcal{Y}_j$ is continuous this sets are also compact and we get
$$\Gamma=\lim_{\longleftarrow} \Gamma_j$$
and consequently
$$\Gamma \cap \psi_I(\alpha)=\lim_{\longleftarrow} \left( \Gamma_j \cap \psi_{I_j}(\alpha)\right).$$
For all $\alpha > 0$ and all compact subsets $\Gamma$ of $\psi_{I}(\alpha)^C$ (i.e. $\Gamma \cap \psi_I(\alpha)=\emptyset$) we have $\Gamma_j \cap \psi_{I_j}(\alpha)=\emptyset$ for some $j\in J$ (cf. theorem B.4 in (\cite{dembo})). Thus we get
$$\limsup_{\varepsilon \to 0} \varepsilon \log \mu_{\varepsilon}(\Gamma) \leq \limsup_{\varepsilon \to 0} \varepsilon \log \mu_{\varepsilon} \circ p_j^{-1}(\Gamma_j)\leq -\alpha.$$
\end{proof}

Now we come back to the situation of the previous sections. $X_t$,
$t \geq 0$, is the Markov process on a probability space
$(\Omega,\Pp)$ associated with the Dirichlet form $(\E,\D)$ where
$\D \subset L^2(\X,m)$. As before, we assume
$\Omega=\mathcal{C}([0,1],\X)$. Then $\Pp^s$ is the distribution of
the time--scaled Markov process $X_{\cdot}^s$ where $X_t^s=X_{s
\cdot t}$ for $t \in [0,1],\ s \geq 0$. That is $\Pp^s(X_t \in
\cdot)=\Pp(X_{s \cdot t} \in \cdot)$. Let

$$J=\bigcup_{n=0}^{\infty}\{\Delta^{n} :
\Delta^{n}=\{0=t_0 < t_1 <\ldots < t_{n}=1\} \mbox{ partition of } [0,1] \},$$

be the union of all partitions $\Delta^n$ of the unit interval
$[0,1]$. A partial ordering on $J$ is induced by inclusion.

It is a well known fact, that the family of finite dimensional
distributions $\{{\Pi_\Delta}_*\mu \}$ of a stochastic process $X$
on a probability space $(\Omega,\A,\mu)$ with values in
$(E,\mathcal{B})$ together with $\Pi_\Delta $ form a projective
system, here $\Pi_\Delta $is the projection of functions onto their
values at the time instances $t_1,t_2,\ldots,t_n$ of $\Delta$.

We have seen in the last section in theorem (\ref{thm}), that the
family of finite dimensional distribution $\{{\Pp^s}_*\Pi_\Delta\}$
of the time--scaled Markov process $X_t^s$ associated to the
Dirichlet form $\E$ satisfies the upper estimate of the weak LDP with
good rate function $H_\Delta$, where $H_\Delta$ is the discretized
energy functional as defined in (\ref{hdelta}). According to the
discussion preceding we can apply corollary \ref{weakdawsongaertner}.
Then we get, that $\{\Pp^s\}$ satisfies the upper estimate of the weak
LDP with good rate function

\begin{equation}\label{energiefunktional}
H(\gamma)=\sup_{\Delta \in J}H_\Delta(\gamma),
\end{equation}

for a continuous function $\gamma : [0,1] \to \X$.

\begin{rem}
If we take a refinement $\widetilde{\Delta}=\{0=t_0<t_1<\ldots <
t_i<r<t_{i+1}<\ldots < t_n=1\}$ of a partition
$\Delta=\{0=t_0<t_1<\ldots < t_i<t_{i+1}< \ldots < t_n=1\}$ of the
unit interval, it is easy to see $
H_{\Delta}<H_{\widetilde{\Delta}}$, because

\begin{eqnarray*}
\frac{d^2(\gamma_{t_i},\gamma_{t_{i+1}})}{t_{i+1}-t_i} &\leq&
    \frac{\big[d(\gamma_{t_i},\gamma_r)+d(\gamma_r,\gamma_{t_{i+1}})\big]^2}{t_{i+1}-t_i}\\
&\leq& \frac{1}{t_{i+1}-t_i}\left[\frac{t_{i+1}-t_i}{r-t_i}d^2(\gamma_{t_i}-\gamma_r)
    +\frac{t_{i+1}-t_i}{t_{i+1}-r}d^2(\gamma_r-\gamma_{t_{i+1}})\right] \\
&=& \frac{d^2(\gamma_{t_i},\gamma_r)}{r-t_i}+\frac{d^2(\gamma_r,\gamma_{t_{i+1}})}{t_{i+1}-r}.
\end{eqnarray*}
We use $(a+b)^2\leq(1+\lambda)a^2+(1+\frac{1}{\lambda})b^2$.
\end{rem}
In the following we want to get a more explicit expression for the energy $H$. For this we consider absolutely continuous curves $\gamma \in AC^2([0,1],\X)$ with finite $2$-energy. This are curves for which exists $m \in L^2([0,1])$ such that
\begin{equation}\label{ac2}
d(\gamma(s),\gamma(t))\leq \int_s^t m(r)\mathrm{d}r \quad \forall s,t \in [0,1],\ s\leq t.
\end{equation}
This curves have the property to be differentiable (in the metric sense) a.e.. To be more precise the following theorem (cf. \cite{Ambrosio}) holds

\begin{thm}\label{metrabl}
Let $\gamma \in AC^2([0,1],\X)$. Then for Lebesgue-a.e. $t\in [0,1]$ there exists the limit
\begin{equation}
|\dot{\gamma}|(t):= \lim_{h \to 0} \frac{d(\gamma(t),\gamma(t+h))}{|h|}.
\end{equation}
Furthermore $|\dot{\gamma}|\in L^2$ and we know $d(\gamma(s),\gamma(t))\leq \int_s^t |\dot{\gamma}|(r)\mathrm{d}r$. Moreover $|\dot{\gamma}|(t)\leq m(t)$ for Lebesgue-a.e. $t \in [0,1]$, for all $m$ such that (\ref{ac2}) holds.
\end{thm}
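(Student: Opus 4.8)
The plan is to reduce this metric--valued differentiation statement to a countable family of scalar absolutely continuous functions and then apply the classical Lebesgue differentiation theorem on $[0,1]$. Since $(\X,d)$ is separable (as $\X$ is pre--compact, hence totally bounded in $d$), I would first fix a countable dense subset $\{x_k\}_{k\in\N}$ of $\X$ and set $\phi_k(t):=d(\gamma(t),x_k)$ for $t\in[0,1]$. From the triangle inequality together with (\ref{ac2}) one gets $|\phi_k(s)-\phi_k(t)|\le d(\gamma(s),\gamma(t))\le\int_s^t m(r)\,\mathrm{d}r$ for all $s\le t$, so each $\phi_k$ is a real absolutely continuous function on $[0,1]$; in particular $\phi_k$ is differentiable at Lebesgue--a.e.\ $t$, and $|\phi_k'(t)|\le m(t)$ a.e.

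Next I would introduce $g(t):=\sup_{k\in\N}|\phi_k'(t)|$, which is measurable as a countable supremum of measurable functions and satisfies $g\le m$ a.e., hence $g\in L^2([0,1])$. The key point to verify is the identity
\[
d(\gamma(s),\gamma(t))=\sup_{k\in\N}|\phi_k(s)-\phi_k(t)|\qquad(s,t\in[0,1]),
\]
where ``$\ge$'' is immediate from the triangle inequality and ``$\le$'' follows by choosing $x_k\to\gamma(s)$ in the metric $d$, which forces $\phi_k(s)\to0$ and $\phi_k(t)\to d(\gamma(s),\gamma(t))$. Combined with $|\phi_k(s)-\phi_k(t)|=\big|\int_s^t\phi_k'(r)\,\mathrm{d}r\big|\le\int_s^t|\phi_k'(r)|\,\mathrm{d}r\le\int_s^t g(r)\,\mathrm{d}r$, this already yields the integral bound $d(\gamma(s),\gamma(t))\le\int_s^t g(r)\,\mathrm{d}r$ for $s\le t$.

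Finally I would identify $g$ with the metric derivative. Discarding the (countable union of) Lebesgue--null sets off which every $\phi_k'$ exists and every point is a Lebesgue point of $g$, I obtain a full--measure set $E$; for $t\in E$, the integral bound and the Lebesgue differentiation theorem give $\limsup_{h\to0}\frac{d(\gamma(t),\gamma(t+h))}{|h|}\le g(t)$, while for each fixed $k$, differentiability of $\phi_k$ at $t$ gives $\liminf_{h\to0}\frac{d(\gamma(t),\gamma(t+h))}{|h|}\ge\lim_{h\to0}\frac{|\phi_k(t+h)-\phi_k(t)|}{|h|}=|\phi_k'(t)|$, and taking the supremum over $k$ gives $\liminf_{h\to0}\frac{d(\gamma(t),\gamma(t+h))}{|h|}\ge g(t)$. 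Hence the limit $|\dot\gamma|(t)$ exists on $E$ and equals $g(t)$, so $|\dot\gamma|=g\in L^2$, it satisfies the asserted integral estimate, and $|\dot\gamma|(t)=g(t)\le m(t)$ a.e.\ for every $m$ admissible in (\ref{ac2}), which is the minimality statement. The step I expect to be the main obstacle is the separability/identity step: one must work with the intrinsic pseudometric $d$ (which a priori need not even be a genuine metric) to secure separability of $(\X,d)$ and the representation of $d(\gamma(s),\gamma(t))$ as a countable supremum, and then bookkeep the countably many exceptional null sets so that the $\limsup$ and $\liminf$ estimates hold simultaneously on one common set of full measure.
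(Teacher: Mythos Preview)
The paper does not give its own proof of this theorem; it is quoted from \cite{Ambrosio} (Ambrosio--Gigli--Savar\'e, Theorem~1.1.2), and your argument is precisely the standard one found there. So your proposal is correct and coincides with the proof in the cited reference.

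One small refinement concerning the obstacle you flag: you do not actually need separability of the whole space $(\X,d)$. Since $\gamma$ is continuous on the compact interval $[0,1]$, its image $\gamma([0,1])$ is compact in $(\X,d)$ and in particular separable; it suffices to take the countable set $\{x_k\}$ dense in $\gamma([0,1])$. With this choice the identity $d(\gamma(s),\gamma(t))=\sup_k|\phi_k(s)-\phi_k(t)|$ follows exactly as you wrote (choose $x_k\to\gamma(s)$ along the image), and no global assumption on $(\X,d)$ beyond it being a pseudometric is needed. This removes the dependence on pre--compactness of $\X$ or on assumption~(A), which is why the theorem holds in the generality of \cite{Ambrosio}.
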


Now we are able to formulate following lemma

\begin{lemma}\label{acenergy}
For all $\gamma \in \Omega$ we define
\begin{equation}
\widetilde{H}(\gamma):=\left\{ \begin{array}{cl}
                            \frac{1}{2}\int_0^1|\dot{\gamma}|^2(r)\mathrm{d}r & \mathrm{,\,if } \gamma \in AC^2([0,1],\X)\\
                            \infty & \mathrm{,\,else.}
                            \end{array}\right.
\end{equation}
Then $H(\gamma)\leq \widetilde{H}(\gamma)$.
\end{lemma}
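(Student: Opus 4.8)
The plan is to show that for $\gamma \in AC^2([0,1],\X)$ the supremum $H(\gamma) = \sup_{\Delta \in J} H_\Delta(\gamma)$ over all partitions is at most $\frac12\int_0^1 |\dot\gamma|^2(r)\,dr$; when $\gamma \notin AC^2$ there is nothing to prove since the right-hand side is $\infty$. So fix $\gamma \in AC^2([0,1],\X)$ and an arbitrary partition $\Delta = \{0 = t_0 < t_1 < \dots < t_n = 1\}$. The idea is to bound each summand of $H_\Delta(\gamma)$ using Theorem \ref{metrabl}, which gives $d(\gamma(t_i),\gamma(t_{i+1})) \leq \int_{t_i}^{t_{i+1}} |\dot\gamma|(r)\,dr$, and then apply the Cauchy--Schwarz inequality on each subinterval.

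Concretely, for each $i \in \{0,\dots,n-1\}$ I would estimate
\begin{equation*}
\frac{d^2(\gamma(t_i),\gamma(t_{i+1}))}{t_{i+1}-t_i}
\;\leq\; \frac{1}{t_{i+1}-t_i}\left(\int_{t_i}^{t_{i+1}} |\dot\gamma|(r)\,dr\right)^2
\;\leq\; \frac{1}{t_{i+1}-t_i}\,(t_{i+1}-t_i)\int_{t_i}^{t_{i+1}} |\dot\gamma|^2(r)\,dr
\;=\; \int_{t_i}^{t_{i+1}} |\dot\gamma|^2(r)\,dr,
\end{equation*}
where the middle step is Cauchy--Schwarz applied to the functions $|\dot\gamma|$ and $1$ on $[t_i,t_{i+1}]$, using that $|\dot\gamma| \in L^2([0,1])$ by Theorem \ref{metrabl}. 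Summing over $i$ and using that the subintervals $[t_i,t_{i+1}]$ partition $[0,1]$, the telescoping of the integrals gives
\begin{equation*}
H_\Delta(\gamma) = \frac12 \sum_{i=0}^{n-1} \frac{d^2(\gamma(t_i),\gamma(t_{i+1}))}{t_{i+1}-t_i}
\;\leq\; \frac12 \sum_{i=0}^{n-1} \int_{t_i}^{t_{i+1}} |\dot\gamma|^2(r)\,dr
\;=\; \frac12 \int_0^1 |\dot\gamma|^2(r)\,dr \;=\; \widetilde H(\gamma).
\end{equation*}
Since this bound is uniform in $\Delta \in J$, taking the supremum yields $H(\gamma) \leq \widetilde H(\gamma)$, which is the claim.

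This argument is essentially routine once Theorem \ref{metrabl} is invoked; there is no real obstacle. The only point requiring a little care is the very first inequality in the displayed chain, namely $d(\gamma(t_i),\gamma(t_{i+1})) \leq \int_{t_i}^{t_{i+1}} |\dot\gamma|(r)\,dr$ — this is exactly the content of Theorem \ref{metrabl} and holds because $\gamma \in AC^2$ and $|\dot\gamma|$ is an admissible choice of $m$ in (\ref{ac2}). Everything else is Cauchy--Schwarz and additivity of the integral over the partition, so the proof is short.
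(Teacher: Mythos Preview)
Your proof is correct and follows essentially the same approach as the paper: handle the trivial case $\gamma\notin AC^2$, then for $\gamma\in AC^2$ bound each summand via $d(\gamma(t_i),\gamma(t_{i+1}))\leq\int_{t_i}^{t_{i+1}}|\dot\gamma|(r)\,dr$ from Theorem~\ref{metrabl}, apply Cauchy--Schwarz on each subinterval, sum, and take the supremum over partitions.
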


\begin{proof}
(i): $\gamma \not \in AC^2 \quad \Longrightarrow \quad H(\gamma)\leq \widetilde{H}(\gamma)=\infty.$ \checkmark\\[2ex]
(ii): $\gamma \in AC^2$: Let $\Delta^n=\{0=t_0<t_1<\ldots<t_n=1\}$ be an arbitrary partition, then
\begin{eqnarray*}
\frac{1}{2}\sum_{i=0}^{n-1}\frac{d^2(\gamma(t_{i}),\gamma(t_{i+1}))}{t_{i+1}-t_i} &=& \frac{1}{2}\sum_{i=0}^{n-1} (t_{i+1}-t_i) \left(\frac{d(\gamma(t_{i}),\gamma(t_{i+1}))}{t_{i+1}-t_i}\right)^2\\
&\leq& \frac{1}{2} \sum_{i=0}^{n-1} (t_{i+1}-t_i)^{-1} \left(\int_{t_i}^{t_{i+1}} |\dot{\gamma}|(r) \mathrm{d}r\right)^2\\
&\leq&\frac{1}{2}  \sum_{i=0}^{n-1} \int_{t_i}^{t_{i+1}} |\dot{\gamma}|^2(r) \mathrm{d}r = \frac{1}{2} \int_0^1 |\dot{\gamma}|^2(r) \mathrm{d}r=\widetilde{H}(\gamma).
\end{eqnarray*}
Since this holds true for all partitions we get $H(\gamma)\leq \widetilde{H}(\gamma)$.
\end{proof}

The next goal is to prove equality in the conclusion of lemma \ref{acenergy}, namely

\begin{thm}
Let $\gamma \in \Omega$ and $H(\gamma)$ and $\widetilde H(\gamma)$ defined as above. Then
$$H(\gamma)=\widetilde{H}(\gamma).$$
\end{thm}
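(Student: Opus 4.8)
The plan is to prove the reverse inequality $\widetilde{H}(\gamma)\le H(\gamma)$, since Lemma~\ref{acenergy} already gives $H(\gamma)\le\widetilde{H}(\gamma)$. The crucial case is when $H(\gamma)<\infty$; then I must show $\gamma\in AC^2([0,1],\X)$ and $\frac12\int_0^1|\dot\gamma|^2\le H(\gamma)$. First I would fix a partition $\Delta^n$ and observe that from the definition of $H_\Delta$ and Cauchy--Schwarz, for any $s<t$ in the partition one has $d(\gamma(s),\gamma(t))^2\le (t-s)\cdot 2H_\Delta(\gamma)\le (t-s)\cdot 2H(\gamma)$; in particular $\gamma$ is $\tfrac12$-Hölder with a uniform constant, hence uniformly continuous, and extends this Hölder bound from partition points to all $s<t$ by density and continuity of $d$. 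This already shows $\gamma$ satisfies a bound of the form $d(\gamma(s),\gamma(t))\le C\sqrt{t-s}$, a first step toward absolute continuity.

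Next I would produce the $L^2$ ``speed'' function needed in \eqref{ac2}. The natural approach is to define, for each $n$, the piecewise-constant function
\[
m_n(r)=\frac{d(\gamma(t_i^{(n)}),\gamma(t_{i+1}^{(n)}))}{t_{i+1}^{(n)}-t_i^{(n)}}\quad\text{for }r\in[t_i^{(n)},t_{i+1}^{(n)}),
\]
along a refining sequence of partitions $\Delta^n$ whose mesh tends to $0$ (e.g.\ dyadic). By construction $\int_0^1 m_n(r)^2\,dr=\sum_i (t_{i+1}-t_i) m_n^2 = 2H_{\Delta^n}(\gamma)\le 2H(\gamma)$, so the $m_n$ are bounded in $L^2([0,1])$. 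Passing to a weakly convergent subsequence $m_n\rightharpoonup m$ in $L^2$, weak lower semicontinuity of the norm gives $\int_0^1 m^2\le \liminf 2H_{\Delta^n}(\gamma)\le 2H(\gamma)$. The remaining point is to verify that this limiting $m$ is an admissible upper gradient, i.e.\ $d(\gamma(s),\gamma(t))\le\int_s^t m(r)\,dr$ for all $s\le t$: for $s,t$ that are eventually partition points one has $d(\gamma(s),\gamma(t))\le\sum d(\gamma(t_i),\gamma(t_{i+1}))=\int_s^t m_n\,dr$ for all large $n$ (using the triangle inequality for $d$ and the fact that refinement only increases the sum — cf.\ the Remark after \eqref{energiefunktional}), and $\int_s^t m_n\to\int_s^t m$ by weak convergence (testing against $\eins_{[s,t]}$); general $s<t$ follow by continuity of $\gamma$ and $d$. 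This establishes $\gamma\in AC^2$, and then Theorem~\ref{metrabl} gives $|\dot\gamma|\le m$ a.e., whence $\widetilde{H}(\gamma)=\tfrac12\int_0^1|\dot\gamma|^2\le\tfrac12\int_0^1 m^2\le H(\gamma)$. The case $H(\gamma)=\infty$ is trivial.

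The main obstacle I anticipate is the admissibility check for the weak limit $m$ — specifically, making sure that along the refining sequence the sums $\int_s^t m_n\,dr=\sum_i d(\gamma(t_i),\gamma(t_{i+1}))$ really do dominate $d(\gamma(s),\gamma(t))$ simultaneously for (a dense set of) all pairs $s<t$, and that the weak-$L^2$ convergence can be combined with this to pass to the limit; choosing the partitions to be nested (so that the partition-point set is $\bigcup_n\Delta^n$, dense in $[0,1]$) and using the monotonicity of $H_\Delta$ under refinement is what makes this work cleanly. A secondary subtlety is that $\X$ is only a metric (in fact pseudo-metric) space, so one cannot differentiate curves componentwise; this is precisely why Theorem~\ref{metrabl} on metric derivatives is invoked rather than any elementary argument. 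Everything else is a routine application of Cauchy--Schwarz, the triangle inequality, and weak lower semicontinuity of the $L^2$-norm.
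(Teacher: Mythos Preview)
Your argument is correct and follows the same overall strategy as the paper's proof: construct discrete ``speed'' data with $L^2$-mass bounded by $2H(\gamma)$, pass to a limit, verify that the limit is an admissible function in the sense of \eqref{ac2}, and then invoke Theorem~\ref{metrabl} to compare $|\dot\gamma|$ with it. The difference is only in the packaging of the compactness step. The paper encodes the discrete speed as measures $\nu_N=\sum_i d(\gamma(i/N),\gamma((i+1)/N))\,\delta_{i/N}$ and $\mu_N=\sum_i N^{-1}\delta_{i/N}$, takes weak limits $\nu_N\rightharpoonup\nu$, $\mu_N\rightharpoonup\mu$, and invokes the joint lower semicontinuity of the functional $\mathcal{E}(\nu\mid\mu)=\int|d\nu/d\mu|^2\,d\mu$ to conclude $\nu=f\mu$ with $\|f\|_2^2\le 2H(\gamma)$; the admissibility $d(\gamma(s),\gamma(t))\le\nu([s,t])$ is obtained by passing the corresponding discrete inequality to the limit. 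Your route---piecewise-constant $m_n\in L^2([0,1])$, weak $L^2$ compactness, and lower semicontinuity of the $L^2$-norm---is the same idea with the reference measure fixed to Lebesgue from the outset; it replaces the (nontrivial) joint lower semicontinuity lemma by the elementary weak lower semicontinuity of the Hilbert norm, and is therefore a bit more self-contained. Both proofs finish identically via Theorem~\ref{metrabl}.
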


\begin{proof}
It remains to show $\widetilde{H}(\gamma)\leq H(\gamma)$. First of all we observe that if $\gamma \notin AC^2$ then $\sup_{\Delta^n} \sum_{i=0}^{n-1} d(\gamma(t_i),\gamma(t_{i+1}))=\infty$ and hence also $\sup_{\Delta^n} \sum_{i=0}^{n-1} \frac{d^2(\gamma(t_i),\gamma(t_{i+1}))}{t_{i+1}-t_i}=\infty$. Consequently we know $\widetilde{H}(\gamma)=\infty = H(\gamma)$ for all $\gamma \notin AC^2$.

On the other hand if $\gamma \in AC^2$ we see $\sup_{\Delta^n} \sum_{i=0}^{n-1} d(\gamma(t_i),\gamma(t_{i+1})) \leq \sup_{\Delta^n} \int_0^1 m(r) < \infty$ where $m$ is a $L^2$ function (cf. (\ref{ac2})). So in the following considerations it is adequate only to take care about continuous curves $\gamma$ with finite length.

For such a $\gamma$ we define the discrete measure
$$\nu_N:=\sum_{i=0}^{N-1} d\left(\gamma \left( \frac{i}{N}\right),\gamma \left(\frac{i+1}{N} \right)\right).$$
This bounded monotone sequence converges up to subsequences to a measure $\nu$ for $N \to \infty$. Further we know
$$d(\gamma(s),\gamma(t)) \leq \nu_N([s,t]) \quad \mbox{for all } 0\leq s \leq t \leq1 \mbox{ and } s,t \in \left\{0,\frac{1}{N},\ldots,\frac{N-1}{N},1\right\}.$$
Passing to the limit yields
\begin{equation}\label{dleqnu}
d(\gamma(s),\gamma(t)) \leq \nu([s,t]) \quad \mbox{for all } 0\leq s \leq t \leq1.
\end{equation}

Consider
\begin{equation}
\mathcal{E}(\nu|\mu):=\left\{ \begin{array}{cl}
                            \int_0^1|\frac{\mathrm{d}\,\nu}{\mathrm{d}\mu}|^2\mathrm{d}\,\mu & \mathrm{,if\ } \nu \ll \mu\\
                            \infty & \mathrm{,else.}
                            \end{array}\right.
\end{equation}
This is a joint semicontinuous functional.

Let
$$\mu_N:=\sum_{i=0}^{N-1}\frac{1}{N}\delta_{i/N} \rightharpoonup \mu$$
where $\mu$ is the Lebesgue measure on $[0,1]$. Then

$$\mathcal{E}(\nu_N|\mu_N)=\int_0^1 |\frac{\mathrm{d}\,\nu_N}{\mathrm{d}\,\mu_N}|^2\mathrm{d}\mu_N=\sum_{i=0}^{N-1} \frac{d^2\left(\gamma\left(\frac{i}{N}\right),\gamma\left(\frac{i+1}{N}\right)\right)}{1/N}\leq \sup_{\Delta^n} \sum_{i=0}^{n-1}\frac{d^2(\gamma(t_i),\gamma(t_{i+1}))}{t_{i+1}-t_i}=2H(\gamma).$$

So if $H(\gamma) < \infty$ then also $\mathcal{E}(\nu|\mu)< \infty$ and therefore $\nu$ is absolutely continuous with respect to the Lebesgue measure $\mu$. To be more precise $\nu=f\,\mu$ with $||f||_2\leq \sqrt{2H(\gamma)}.$

Together with (\ref{dleqnu}) we see
$$d(\gamma(s),\gamma(t))\leq \nu([s,t])=\int_s^t f(r)\mathrm{d}r.$$
Then theorem \ref{metrabl} yields $|\dot{\gamma}|_r \leq f(r)$ for Lebesgue-a.e. $r \in [0,1]$. Hence we get for $\gamma \in AC^2$

$$\int_0^1|\dot{\gamma}|^2_r \mathrm{d}r \leq \int_0^1 f(r)^2 \mathrm{d}r \leq\int_0^1 \sup_{\Delta^n}\sum_{i=0}^{n-1}\frac{d^2(\gamma(t_i),\gamma(t_{i+1}))}{t_{i+1}-t_i}\,\mathrm{d}r=2H(\gamma).$$
\end{proof}

The argument of the last proof was communicated to us by Professor L. Ambrosio.

Now we are able to state our main theorem

\begin{thm} \label{mainthm}
For all $\alpha>0$ and all compact subsets $\Gamma$ of $\{\gamma : \widetilde{H}(\gamma)>\alpha\}$ the following holds

$$\limsup_{s \to 0}\,s \, \log \Pp^s(X_{\cdot} \in \Gamma)
\leq  -\alpha.$$

\end{thm}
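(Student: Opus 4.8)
The plan is to combine the two main ingredients already assembled in the paper: the weak LDP upper bound for the law $\{\Pp^s\}$ on $\Omega = \mathcal C([0,1],\X)$ with rate function $H(\gamma)=\sup_{\Delta\in J}H_\Delta(\gamma)$, obtained via Corollary \ref{weakdawsongaertner} applied to the projective system of finite-dimensional distributions, together with the identification $H=\widetilde H$ proved in the preceding theorem. The whole point of establishing $H=\widetilde H$ is precisely so that the abstract rate function coming out of Dawson--G\"artner becomes the geometrically meaningful energy $\widetilde H$.

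\medskip

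Concretely, I would argue as follows. By the discussion preceding \eqref{energiefunktional}, for each partition $\Delta\in J$ the push-forward family $\{{\Pp^s}_*\Pi_\Delta^{-1}\}$ satisfies the upper bound of the weak LDP on $\X^{n+1}$ with rate function $H_\Delta$ (this is exactly Theorem \ref{5.2}, after noting that the cylinder sets $\mathcal A_\Delta$ generate the relevant compacts, and that $H_\Delta$ is lower semicontinuous since $d$ is). The projective system is the one described in the paper, with $J$ ordered by refinement of partitions and $\Y=\Omega$ its projective limit. Applying Corollary \ref{weakdawsongaertner} gives that $\{\Pp^s\}$ satisfies the upper bound of the weak LDP on $\Omega$ with rate function $H(\gamma)=\sup_{\Delta}H_\Delta(\gamma)$. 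By the weak-LDP upper bound, for every $\alpha>0$ and every compact $\Gamma$ with $\Gamma\cap\psi_H(\alpha)=\emptyset$ — that is, $\Gamma\subset\{\gamma:H(\gamma)>\alpha\}$ — we have $\limsup_{s\to0}s\log\Pp^s(X_\cdot\in\Gamma)\le-\alpha$. Finally, since the previous theorem gives $H(\gamma)=\widetilde H(\gamma)$ for all $\gamma\in\Omega$, the hypothesis $\Gamma\subset\{\gamma:\widetilde H(\gamma)>\alpha\}$ is identical to $\Gamma\subset\{\gamma:H(\gamma)>\alpha\}$, and the claimed inequality follows.

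\medskip

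The step I expect to require the most care is verifying that the hypotheses of Corollary \ref{weakdawsongaertner} are genuinely met in this infinite, directed index set $J$: one must check that $\{(\X^{n+1},p_{\Delta\Delta'})\}$ with the restriction maps is a projective system of Hausdorff spaces whose limit is (homeomorphic to) $\mathcal C([0,1],\X)$ — here the pre-compactness of $\X$ assumed in this section, together with continuity of the curves, is what makes the projective limit the right object — and that $H_\Delta$ is lower semicontinuous on $\X^{n+1}$ so that it is a legitimate rate function for the weak LDP. Lower semicontinuity of $H_\Delta$ reduces to lower semicontinuity of $(x,y)\mapsto d(x,y)$, which holds under assumption (A) because $d$ is a supremum of differences of (quasi-)continuous functions in $\D_0$. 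Once these bookkeeping points are in place, the proof is essentially a two-line invocation of the corollary and of the identity $H=\widetilde H$.

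\begin{proof}
By Theorem \ref{5.2}, for every partition $\Delta=\{0=t_0<\dots<t_n=1\}\in J$ the family of finite-dimensional distributions $\{{\Pp^s}_*\Pi_\Delta^{-1}\}$ on $\X^{n+1}$ satisfies the upper bound of the weak LDP with rate function $H_\Delta$ from \eqref{hdelta}; the functional $H_\Delta$ is lower semicontinuous because $(x,y)\mapsto d(x,y)$ is lower semicontinuous under assumption (A), being a supremum of differences $f(x)-f(y)$ with $f\in\D_0$. The spaces $\{\X^{n+1}\}_{\Delta\in J}$ together with the coordinate-restriction maps $p_{\Delta\Delta'}$ (for $\Delta\subset\Delta'$) form a projective system of Hausdorff spaces, and since $\X$ is pre-compact and the trajectories of $X$ are continuous, its projective limit is $\Omega=\mathcal C([0,1],\X)$, with $\Pp^s$ the corresponding measure and $\Pi_\Delta$ the projections. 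Applying Corollary \ref{weakdawsongaertner} we conclude that $\{\Pp^s\}$ satisfies the upper bound of the weak LDP on $\Omega$ with rate function
$$H(\gamma)=\sup_{\Delta\in J}H_\Delta(\gamma),\qquad \gamma\in\Omega,$$
as already recorded in \eqref{energiefunktional}.

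Now fix $\alpha>0$ and a compact subset $\Gamma$ of $\{\gamma:\widetilde H(\gamma)>\alpha\}$. By the previous theorem, $H(\gamma)=\widetilde H(\gamma)$ for every $\gamma\in\Omega$, so $\Gamma$ is a compact subset of $\{\gamma:H(\gamma)>\alpha\}=\psi_H(\alpha)^C$, i.e. $\Gamma\cap\psi_H(\alpha)=\emptyset$. Hence the weak-LDP upper bound from Corollary \ref{weakdawsongaertner} applies and yields
$$\limsup_{s\to0}\,s\,\log\Pp^s(X_\cdot\in\Gamma)\leq-\alpha,$$
which is the assertion.
\end{proof}
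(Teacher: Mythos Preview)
Your proposal is correct and follows exactly the route the paper lays out: the paper does not give a separate proof of Theorem \ref{mainthm} but assembles it from Theorem \ref{5.2}, Corollary \ref{weakdawsongaertner} (yielding the weak-LDP upper bound with rate function $H=\sup_\Delta H_\Delta$ in \eqref{energiefunktional}), and the identification $H=\widetilde H$, which is precisely what you do. Your added remarks on lower semicontinuity of $H_\Delta$ and the projective-system bookkeeping are details the paper leaves implicit.
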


\section{Application to Wasserstein Diffusion}

At the end we want to present an application of our work to the
Wasserstein diffusion as it is introduced in \cite{RS07}. The
Wasserstein diffusion can be regarded as a stochastic perturbation
of the heat flow on $\mathcal{P}([0,1])$, the space of probability
measures on the unit interval.

To be more precise in \cite{RS07} it is constructed a probability
measure $\mathbf{P}^\beta$ on $\mathcal{P}([0,1])$ formally given as

$$d \mathbf{P}^\beta (\mu)=\frac{1}{Z_\beta}e^{-\beta \mathrm{Ent}(\mu)}d\mathbf{P}(\mu),$$

here $\mathbf{P}$ is a 'uniform distribution' on
$\mathcal{P}([0,1])$, $\beta > 0$, $Z_\beta$ a normalization
constant and $\mathrm{Ent}$ the relative entropy. One important
result in \cite{RS07} is that the Wasserstein Dirichlet form

$$\Ee(u,u)=\int_{\mathcal{P}}\|D u (\mu)\|^2_{L^2(\mu)}d\mathbf{P}^\beta$$ is a strongly
local, regular, recurrent Dirichlet form on
$L^2(\mathcal{P}([0,1]),\mathbf{P}^\beta)$. Hence there exists a
strong Markov process $((\mu_t)_{t\geq 0},\Pp^\beta)$ on
$\mathcal{P}([0,1])$ associated with the Dirichlet form $\Ee$.
$(\mu_t)_{t \geq 0}$ is called Wasserstein diffusion. It is also
shown in \cite{RS07} that the intrinsic metric for the Dirichlet
form $\Ee$ is the $L^2$--Wasserstein distance $d_W$.

If $\Pp^{\alpha,\beta}$ denotes the law of the rescaled process
$(\mu_{\alpha \cdot t})_{t \in [0,1]},\ \alpha \geq 0$, then theorem
(\ref{mainthm}) reads as

\begin{thm}
For all $\kappa$ and all compact subsets $\Gamma$ of $\{\mu  \in \mathcal{C}([0,1],\mathcal{P}([0,1])) : \widetilde{H}(\mu) >\kappa\}$  we have
\begin{equation*}
\limsup_{\alpha \to 0} \alpha \log \Pp^{\alpha,
\beta}(\mu_{\cdot}\in \Gamma) \leq - \kappa
\end{equation*}
where
\begin{equation*}
\widetilde{H}(\nu)=\left\{
    \begin{array}{cl}
    \frac{1}{2} \int_0^1|\dot{\nu}|^2(t)dt & \mbox{,if } \nu \in AC^2\big((\mathcal{P}([0,1]),d_W)\big)\\
    \infty & \mbox{,else.}
    \end{array}
    \right.
\end{equation*}
\end{thm}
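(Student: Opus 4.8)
The plan is to obtain this theorem as a direct specialization of the abstract Theorem \ref{mainthm} to the Dirichlet form $\Ee$ of the Wasserstein diffusion. First I would check that $\Ee$ on $L^2(\mathcal{P}([0,1]),\mathbf{P}^\beta)$ satisfies all the standing hypotheses used in the previous sections: it is a Dirichlet form (true, as $\mathbf{P}^\beta$ is a probability measure on $\mathcal{P}([0,1])$, which is the required base probability space), it is strongly local and conservative (in the recurrent, hence conservative, case this is exactly the content of the cited result from \cite{RS07}), and there is an associated Markov process $(\mu_t)_{t\geq0}$ with continuous paths, namely the Wasserstein diffusion. The underlying space $\mathcal{P}([0,1])$ equipped with the weak topology is Polish, and since $[0,1]$ is compact, $\mathcal{P}([0,1])$ is compact in the weak topology, so the pre--compactness assumption imposed in Section 4 holds with room to spare.

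The second ingredient is the identification of the intrinsic metric $d$ of $\Ee$ with the $L^2$--Wasserstein distance $d_W$ on $\mathcal{P}([0,1])$, which is again quoted directly from \cite{RS07}. I would note that assumption \textbf{(A)} — that the topology induced by $d$ agrees with the original topology — holds here, because on $\mathcal{P}([0,1])$ with $[0,1]$ compact the $L^2$--Wasserstein metric metrizes the weak topology; this is what makes the intrinsic metric non--degenerate and finite, so that all of Section 2 applies verbatim. With these identifications in place, the discretized energy $H_\Delta$ and the limit energy $H = \sup_{\Delta\in J} H_\Delta$ of Section 4 are now formed using $d_W$, and by the theorem preceding Theorem \ref{mainthm} we have $H = \widetilde H$, where $\widetilde H(\nu) = \tfrac12\int_0^1 |\dot\nu|^2(t)\,dt$ for $\nu\in AC^2((\mathcal{P}([0,1]),d_W))$ and $\widetilde H(\nu)=\infty$ otherwise — which is exactly the functional in the statement.

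Finally I would translate the time--scaling: the rescaled process $(\mu_{\alpha\cdot t})_{t\in[0,1]}$ with law $\Pp^{\alpha,\beta}$ is precisely the process denoted $X^s_\cdot$ with law $\Pp^s$ in Section 4, under the correspondence $s\leftrightarrow\alpha$. Applying Theorem \ref{mainthm} to $\Ee$ then gives, for every $\kappa>0$ and every compact $\Gamma\subset\{\nu:\widetilde H(\nu)>\kappa\}$,
$$\limsup_{\alpha\to0}\,\alpha\,\log\Pp^{\alpha,\beta}(\mu_\cdot\in\Gamma)\leq-\kappa,$$
which is the assertion. I do not anticipate a serious obstacle: the work is entirely in checking that the abstract framework applies, and the only point requiring a little care is verifying hypothesis \textbf{(A)} and the conservativeness/locality of $\Ee$, both of which are supplied by the structural results of \cite{RS07} (strong locality, recurrence, and the computation of the intrinsic metric). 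One should also remark that the statement is vacuous unless one reads $\kappa$ as ranging over $(0,\infty)$, matching the quantifier "for all $\alpha>0$" in Theorem \ref{mainthm}.
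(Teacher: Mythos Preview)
Your proposal is correct and matches the paper's approach: the paper gives no separate proof of this theorem, presenting it explicitly as the statement that ``theorem (\ref{mainthm}) reads as'' in the Wasserstein setting, after recording from \cite{RS07} that $\Ee$ is strongly local, regular, recurrent (hence conservative) and that its intrinsic metric is $d_W$. If anything, your write-up is more careful than the paper in spelling out the compactness of $\mathcal{P}([0,1])$ and the verification of assumption \textbf{(A)}.
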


There is an other interesting point of view. If we make use of the
representation of probability measures by their inverse distribution
functions we can regard the Wasserstein diffusion $\mu_t$ as a
process $g_t$ on $\mathcal{G}$, the space of non--decreasing
functions from $[0,1]$ into itself. In particular, the map $\chi :
\mathcal{P}([0,1])\to \mathcal{G}$, which assigns to each
probability measure $\mu_t \in \mathcal{P}([0,1])$ its inverse
distribution function

$$g_{t}(s)=\inf \{a \in [0,1] : \mu_t([0,a])> s\}$$

with $\inf \emptyset =1$, establishes an isometry between
$(\mathcal{P}([0,1]),d_W)$ and $(\mathcal{G},\|\cdot\|_{L^2})$. Here
the $L^2$--distance on $\mathcal{G}$ is defined as usual

$$\|g-h\|_{L^2}=\left(\int_0^1 |g(s)-h(s)|^2ds\right)^{1/2}.$$

Thus we have the equality

$$d_W(\mu_{t_1},\mu_{t_2})=\|g_{t_1}-g_{t_2}\|_{L^2}.$$

As mentioned above via this construction we have a process
$g_t=g_{\mu_t}$ on $\mathcal{G}$ with associated probability measure
$\mathbb{Q}^\beta$. The asymptotic for the time--scaled process
$g_{t \cdot \alpha}$ is then given by the following theorem

\begin{thm}
For all $\kappa$ and all compact subsets $\Gamma$ of $\{g  \in \mathcal{C}([0,1],\mathcal{G}) : \widetilde{H}(g) >\kappa\}$  we have
\begin{equation*}
\limsup_{\alpha \to 0} \alpha \log \mathbb{Q}^{\alpha \beta}(g_{\cdot} \in \Gamma) \leq -\kappa,
\end{equation*}
where
$$\widetilde{H}(h)=\left\{
    \begin{array}{cl}
    \frac{1}{2}\int_0^1 \int_0^1 |\partial_t h_t(s)|^2 dt\,ds & \mbox{,if }h\in AC^2((\mathcal{G},\|\cdot\|_{L^2}))\\
    \infty & \mbox{,else.}
    \end{array}\right.$$
\end{thm}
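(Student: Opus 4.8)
The plan is to obtain this statement from the preceding theorem (the Wasserstein form of Theorem \ref{mainthm}) by transporting everything through the isometry $\chi$; the only genuinely new ingredient is the identification of the abstract metric energy on $(\mathcal{G},\|\cdot\|_{L^2})$ with the explicit double integral in the statement.

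First I would record that $\chi:(\mathcal{P}([0,1]),d_W)\to(\mathcal{G},\|\cdot\|_{L^2})$ is a bijective isometry, hence a homeomorphism, so it induces a homeomorphism $\chi_*:\mathcal{C}([0,1],\mathcal{P}([0,1]))\to\mathcal{C}([0,1],\mathcal{G})$ of the two path spaces with the uniform topology, via $(\chi_*\mu)_t=\chi(\mu_t)$. Since time rescaling commutes with $\chi$ and $g_t=\chi(\mu_t)$, the push-forward of $\Pp^{\alpha,\beta}$ under $\chi_*$ is exactly $\mathbb{Q}^{\alpha\beta}$. Consequently, for every $\Gamma\subset\mathcal{C}([0,1],\mathcal{G})$ we have $\mathbb{Q}^{\alpha\beta}(g_\cdot\in\Gamma)=\Pp^{\alpha,\beta}(\mu_\cdot\in\chi_*^{-1}(\Gamma))$, and $\chi_*^{-1}(\Gamma)$ is compact whenever $\Gamma$ is.

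Next I would verify that the energies match under $\chi$. Let $\widetilde{H}_{\mathcal{G}}(h)=\tfrac12\int_0^1|\dot h|^2(t)\,dt$ denote the metric energy of Lemma \ref{acenergy} computed in $(\mathcal{G},\|\cdot\|_{L^2})$. Because $\chi$ is an isometry it maps $AC^2$ curves to $AC^2$ curves and preserves metric speeds, $|\dot\mu|(t)=|\dot h|(t)$ for a.e. $t$, so $\widetilde{H}_{\mathcal{P}}(\mu)=\widetilde{H}_{\mathcal{G}}(\chi(\mu))$, where $\widetilde{H}_{\mathcal{P}}$ is the metric energy on $(\mathcal{P}([0,1]),d_W)$ appearing in the previous theorem. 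To identify $\widetilde{H}_{\mathcal{G}}$ with the double integral I use that $\mathcal{G}$ is a closed convex subset of the Hilbert space $L^2([0,1])$ on which $\|\cdot\|_{L^2}$ restricts: thus any $AC^2$ curve $t\mapsto h_t$ in $(\mathcal{G},\|\cdot\|_{L^2})$ is differentiable for a.e. $t$ in the $L^2$ sense, is recovered by integrating its velocity, $h_t=h_0+\int_0^t v_r\,dr$ with $v\in L^2([0,1];L^2([0,1]))$, and $|\dot h|(t)=\|v_t\|_{L^2}$ a.e. (Theorem \ref{metrabl} together with the standard differentiation theory for absolutely continuous curves in Hilbert spaces). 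Applying Fubini to the Bochner integral gives $h_t(s)=h_0(s)+\int_0^t v_r(s)\,dr$ for a.e.\ $s$, hence $\partial_t h_t(s)=v_t(s)$ for a.e.\ $(t,s)\in[0,1]^2$, so that
$$\int_0^1|\dot h|^2(t)\,dt=\int_0^1\|v_t\|_{L^2}^2\,dt=\int_0^1\!\!\int_0^1|v_t(s)|^2\,ds\,dt=\int_0^1\!\!\int_0^1|\partial_t h_t(s)|^2\,dt\,ds,$$
and the same identity (with both sides infinite) holds off $AC^2$.

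Putting these pieces together finishes the proof: given $\kappa$ and a compact $\Gamma\subset\{h:\widetilde{H}_{\mathcal{G}}(h)>\kappa\}$, the set $\chi_*^{-1}(\Gamma)$ is a compact subset of $\{\mu:\widetilde{H}_{\mathcal{P}}(\mu)>\kappa\}$, so the Wasserstein form of Theorem \ref{mainthm} yields $\limsup_{\alpha\to0}\alpha\log\Pp^{\alpha,\beta}(\mu_\cdot\in\chi_*^{-1}(\Gamma))\leq-\kappa$, which by the identity of the previous paragraph is precisely $\limsup_{\alpha\to0}\alpha\log\mathbb{Q}^{\alpha\beta}(g_\cdot\in\Gamma)\leq-\kappa$. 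The main obstacle is the third step: one must be sure that an $AC^2$ curve valued in the closed convex set $\mathcal{G}$ is genuinely $L^2$-differentiable with velocity representable as $\partial_t h_t(s)$ and metric speed $\|v_t\|_{L^2}$ — this is where the Hilbert structure and Fubini do the real work, while the rest is bookkeeping about the isometry.
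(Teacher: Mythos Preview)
Your argument is correct and is exactly the route the paper intends: the theorem is stated without proof, immediately after introducing the isometry $\chi:(\mathcal{P}([0,1]),d_W)\to(\mathcal{G},\|\cdot\|_{L^2})$ and the identity $d_W(\mu_{t_1},\mu_{t_2})=\|g_{t_1}-g_{t_2}\|_{L^2}$, so the implicit proof is simply ``transport the previous theorem through $\chi$.'' You have spelled out precisely this transport, and in addition supplied the one detail the paper leaves to the reader, namely the identification of the abstract metric energy on $(\mathcal{G},\|\cdot\|_{L^2})$ with the explicit double integral $\tfrac12\int_0^1\!\int_0^1|\partial_t h_t(s)|^2\,dt\,ds$ via the differentiability of $AC^2$ curves in Hilbert space; this step is indeed the only place where something beyond bookkeeping is needed, and your use of the Hilbert-space differentiation theorem together with Fubini is the standard way to do it.
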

\vspace{1.5cm}

\bibliographystyle{plain}
\bibliography{lit}

\begin{thebibliography}{1}

\bibitem{Ambrosio}
Luigi Ambrosio, Nicola Gigli, and Giuseppe Savar{\'e}.
\newblock {\em Gradient flows in metric spaces and in the space of probability
  measures}.
\newblock Lectures in Mathematics ETH Z\"urich. Birkh\"auser Verlag, Basel,
  second edition, 2008.

\bibitem{BH91}
Nicolas Bouleau and Francis Hirsch.
\newblock {\em Dirichlet forms and analysis on {W}iener space}, volume~14 of
  {\em de Gruyter Studies in Mathematics}.
\newblock Walter de Gruyter \& Co., Berlin, 1991.

\bibitem{dembo}
Amir Dembo and Ofer Zeitouni.
\newblock {\em Large deviations techniques and applications}, volume~38 of {\em
  Applications of Mathematics (New York)}.
\newblock Springer-Verlag, New York, second edition, 1998.

\bibitem{HR}
M.~Hino and J.~Ram\'{\i}rez.
\newblock Analysis on local {S}mall--time {G}aussian behavior of symmetric
  diffusion semigroups.
\newblock {\em ANN. Probab.}, 75(3):273--297, 1996.

\bibitem{SII}
K.~T. Sturm.
\newblock Analysis on local {D}irichlet spaces. {II}. {U}pper {G}aussian
  estimates for the fundamental solutions of parabolic equations.
\newblock {\em J. Math. Pures Appl. (9)}, 75(3):273--297, 1996.

\bibitem{sturmI}
Karl-Theodor Sturm.
\newblock Analysis on local {D}irichlet spaces. {I}. {R}ecurrence,
  conservativeness and {$L\sp p$}-{L}iouville properties.
\newblock {\em J. Reine Angew. Math.}, 456:173--196, 1994.

\bibitem{RS07}
M.~K. von Renesse and K.-T. Sturm.
\newblock Entropic measure and wasserstein diffusion.
\newblock accepted for publication in Ann.Prob., 2007.

\end{thebibliography}

\end{document}